\newtheorem{Thm}{Theorem}   [section]
\newtheorem{Prop}[Thm]{Proposition}
\newtheorem{Cor}[Thm]{Corollary}
\newtheorem{Lem}[Thm]{Lemma}
\theoremstyle{definition}
\newtheorem*{definition}{Definition}
\newtheorem{remark}{Remark}[section]
\numberwithin{equation}{section}
\begin{document}

\newcommand{\Z}[0]{\mathbb{Z}}
\newcommand{\Q}[0]{\mathbb{Q}}
\newcommand{\F}[0]{\mathbb{F}}
\newcommand{\N}[0]{\mathbb{N}}
\newcommand{\p}[0]{\mathfrak{p}}
\newcommand{\m}[0]{\mathrm{m}}
\newcommand{\n}[0]{\mathrm{n}}
\newcommand{\Tr}{\mathrm{Tr}}
\newcommand{\Hom}[0]{\mathrm{Hom}}
\newcommand{\Gal}[0]{\mathrm{Gal}}
\newcommand{\Res}[0]{\mathrm{Res}}
\newcommand{\id}{\mathrm{id}}
\newcommand{\mult}{\mathrm{mult}}
\newcommand{\adm}{\mathrm{adm}}
\newcommand{\tr}{\mathrm{tr}}
\newcommand{\pr}{\mathrm{pr}}
\newcommand{\Ker}{\mathrm{Ker}}
\newcommand{\ab}{\mathrm{ab}}
\newcommand{\sep}{\mathrm{sep}}
\newcommand{\triv}{\mathrm{triv}}
\newcommand{\alg}{\mathrm{alg}}
\newcommand{\ur}{\mathrm{ur}}
\newcommand{\Coker}{\mathrm{Coker}}
\newcommand{\Aut}{\mathrm{Aut}}
\newcommand{\To}{\longrightarrow}
\renewcommand{\c  }{\mathcal }
\newcommand{\wt}{\widetilde}
\newcommand{\op}{\mathrm}
\newcommand{\ad}[0]{\mathrm{ad}}

\title{Ramification filtration and differential forms}
\author {Victor Abrashkin}
\address{Department of Mathematical Sciences, MCS, 
Durham University,  
South Rd, Durham DH1 3LE, United Kingdom \ \&\ Steklov 
Institute, Gubkina str. 8, 119991, Moscow, Russia
}
\email{victor.abrashkin@durham.ac.uk}
\date{\today }
\keywords{local field, Galois group, ramification filtration}
\subjclass[2010]{11S15, 11S20}

\begin{abstract} Let $L$ be a complete discrete 
valuation field of prime characteristic $p$ with finite residue field. 
Denote by $\Gamma _{L}^{(v)}$ the ramification subgroups of 
$\Gamma _{L}=\operatorname{Gal}(L^{sep}/L)$. We consider the category 
$\operatorname{M\Gamma }_{L}^{Lie}$ of finite 
$\mathbb{Z}_p[\Gamma _{L}]$-modules $H$, satisfying some additional 
(Lie)-condition on the image of $\Gamma _L$ in 
$\operatorname{Aut}_{\mathbb{Z}_p}H$. In the paper it is proved that 
all information about the images of the groups $\Gamma _L^{(v)}$ in 
$\operatorname{Aut}_{\mathbb{Z}_p}H$ 
can be explicitly extracted from some differential forms $\widetilde{\Omega} [N]$ on 
the Fontaine etale $\phi $-module $M(H)$ associated with $H$. The 
forms $\widetilde{\Omega} [N]$ are completely determined by a canonical connection $\nabla $ 
on $M(H)$. In the case of fields $L$ of mixed characteristic, which contain a primitive 
$p$-th root of unity, we show that a  
similar problem for $\mathbb{F}_p[\Gamma _L]$-modules also admits a solution.  
In this case we use the field-of-norms functor to 
construct the corresponding $\phi $-module together with the action 
of the Galois group of a cyclic extension $L_1$ of $L$ of degree $p$. 
Then our solution involves the characteristic $p$ part (provided by 
the field-of-norms functor) and the condition for a ``good'' lift of 
a generator of $\operatorname{Gal}(L_1/L)$. Apart from the 
above differential forms the statement of this condition uses the power series 
coming from the $p$-adic period of the formal group $\mathbb{G}_m$. 
\end{abstract}

\maketitle

\section*{Introduction}

Let $L$ be a complete discrete valuation field with finite residue field of 
characteristic $p$. Let $\Gamma _L$ be the absolute Galois group of $L$. 
Let $\{\Gamma _L^{(v)}\}_{v\geqslant 0}$ be the  
filtration of $\Gamma _L$ by the 
ramification subgroups,  \cite{Se1}. This filtration provides $\Gamma _L$ with additional structure and allows us to   
introduce various classes of infinite field extensions 
(arithmetically profinite, deeply ramified etc.), which  play an important 
role in 
modern arithmetic algebraic geometry. For $\Gamma _L$-modules $H$, the evaluation of 
$v_0(H)\in\Q $ such that $\Gamma _L^{(v)}$ act trivially on $H$ for $v>v_0(H)$, provides us with good estimates for discriminants of the fields of definition 
of $h\in H$. Such estimates are used very often 
to answer various number theoretic questions. 

However, an explicit description of the structure of 
the ramification filtration 
for a very long time was known only at 
the level of the Galois groups of abelian field extensions. 
At the time when the structure of the Galois group 
$\Gamma _L$ was completely described (the case of the maximal $p$-extensions --
Shafarevich\,\cite{Sh}, Demushkin\,\cite{Dm}, and the 
general case -- Janssen-Wingberg\,\cite{JW}) it became clear that  
$\Gamma _L$ is a very weak 
invariant of the field $L$. The situation cardinally changed later 
when it was established  
(Mochizuki\,\cite{Mo}, author\,\cite{Ab6}) that taking 
$\{\Gamma _L^{(v)}\}_{v\geqslant 0}$ into account 
gives us absolute invariant of the field $L$. 
However, in order to work with this invariant we still 
need to know an explicit description of the filtration by 
the groups $\Gamma _L^{(v)}$. 

I.R.\,Shafarevich always paid attention to this problem, for example, 
cf.\,Introduction to \cite{Ko}. 
His motivation was the following: for every prime number $p$ there is only one 
such filtration and we know almost nothing about its structure. In 1990's  
the author developed a nilpotent version of the Artin-Schreier theory and 
obtained an explicit description of the ramification filtration modulo the 
subgroup of $p$-th commutators of $\Gamma _L$. Such description was obtained,  
first, in the characteristic $p$ case, \cite{Ab1, Ab2, Ab3}, 
and then developed in 
the mixed characteristic case, \cite{Ab12, Ab13, Ab14}. These results play a 
crucial role in this paper, where we study the images of the ramification subgroups  
$\Gamma _L^{(v)}$ in the group of automorphisms 
$\op{Aut}_{\Z _p}H$ of finite $\Z _p[\Gamma _L]$-modules $H$. 
Our main result states that  
this arithmetic structure can 
be completely described (under some additional condition) in  
purely geometric properties of Fontaine's etale 
$\phi $-modules $M(H)$.

More precisely, if $\op{char}\,L=p$ 
we construct differential forms $\wt{\Omega} [N]$, 
$N\in\N $, on an extension of scalars of 
$M(H)$, and specify the way how the image of the ramification filtration 
in $\op{Aut}_{\Z _p}H$ can be recovered from these forms. 
Note that the definition of $\wt{\Omega} [N]$ depends only on a 
natural connection constructed on $M(H)$. 
If $\op{char}L=0$ we assume that $L$ contains a $p$-th root 
of unity $\zeta _1\ne 1$ and restrict ourselves to the case of 
Galois $\F _p$-modules. Then we apply the field-of-norms functor 
to reduce the situation to the characteristic $p$ case and use a 
characterization of ``good'' lifts of automorphisms of our cyclic field 
extension of $L$ from \cite{Ab12, Ab13}. This characterization uses 
again the differential forms $\wt{\Omega} [N]$ and a power series coming 
from the $p$-adic period 
of the formal group $\mathbb{G}_m$. 

By our opinion this result establishes quite interesting link between the   
Galois theory of local fields and very popular area of $D$-modules, 
lifts of Frobenius, Higgs vector bundles etc.

\section{Statement of the main result} \label{S1} 

\subsection{General notation} \label{S1.1} 
Everywhere in the paper $p$ is a fixed prime number. 
If $E_0$ is a field then $E_0^{sep}$ is its   
separable closure in some  algebraic closure 
$E_0^{alg}$ of $E_0$. 
If $E$ is a field such that 
$E_0\subset E\subset E_0^{sep}$ set  
$\Gamma _{E}=\op{Gal}(E_0^{sep}/E)$. The field $E_0^{sep}$ will be 
considered as a left $\Gamma _{E_0}$-module, i.e. 
for any $\tau _1,\tau _2\in\Gamma _{E_0}$ and $o\in E_0^{sep}$, 
$(\tau _1\tau _2)o=\tau _1(\tau _2o)$. If $\op{char}E_0=p$ we set 
for any $a\in E_0^{sep}$, 
$\sigma (a)=a^p$. 

If $V$ is a module over a ring $R$ then   
$\op{End}_RV$ is the $R$-algebra of $R$-linear endomorphisms of $V$.  
We always consider $V$ as a left $\op{End}_RV$-module, i.e.  
if $l_1,l_2\in\op{End}_RV$ and $v\in V$ then 
$(l_1l_2)v=l_1(l_2(v))$. We also consider $\op{End}_RV$ 
as a Lie $R$-algebra with the Lie bracket  
$[l_1,l_2]=l_1l_2-l_2l_1$. 
If $S$ is an $R$-module then we often denote by 
$V_S$ the extension of scalars 
$V\otimes _RS$.

\subsection{Functorial system of lifts to characteristic 0} \label{S1.2} 

Suppose $K_0=k_0((t_0))$ is the field of formal Laurent series 
in a (fixed) variable $t_0$ with coefficients in a finite field $k_0$ 
of characteristic $p$. 
The uniformiser $t_0$ provides a $p$-basis for any field 
extension $   E$ of $   K_0$ in $   K_0^{sep}$, i.e. the set 
$\{1,t_0,\dots ,t_0^{p-1}\}$ is a 
$E^p$-basis of $   E$. We use this $p$-basis  
to construct a compatible system of 
lifts $O(   E)$ of the fields $   E$ to characteristic 0. 
This is a special case of the construction of lifts  
from   \cite{Bth}; it can be explained as follows. 

For all $M\in\N $, set  $O_{M}(   E)=
W_{M}(\sigma ^{M-1}   E)[\bar t_0]$, where $\bar t_0=[t_0]$ is 
the Teichmuller representative of $t_0$ in the 
ring of Witt vectors $W_{M}(   E)$. 
The rings $O_M(   E)$ are the lifts of $   E$ modulo $p^M$, i.e. they are 
flat $\Z /p^M$-algebras such that $O_M(   E)\otimes _{\Z /p^M}\Z /p=   E$.  
Note that the system
$$\{O_M(   E)\ |\ M\in\N,    K_0\subset    E\subset    K_0^{sep}\}$$ 
is functorial in $M$ and $   E$. 
In particular, if $E/K_0$ is Galois then there is a natural action 
of $\op{Gal}(   E/   K_0)$ on $O_M(   E)$ 
and $O_M(   E)^{\op{Gal}(E/K_0)}=O_M(K_0)$. 
The morphisms $W_M(\sigma )$ induce $\sigma $-linear 
morphisms on $O_M(E)$ which will be denoted again by $\sigma $. 
In particular, $\sigma (\bar t_0)=\bar t_0^{\,\,p}$. 

Introduce the lifts of the above fields $   E$ 
to characteristic 0 by setting 
$O(   E)=\underset{M}{\varprojlim}\,O_M(   E)$. 
Then $O_M(   E)=O(   E)/p^M$ and 
$O(   E)[1/p]$ is a complete discrete valuation field 
with uniformiser $p$ and the residue field $   E$. 
Clearly, we have the induced morphism $\sigma $ on each $O(   E)$. 
Also, if $   E/   K_0$ is Galois then $\op{Gal}(   E/   K_0)$ 
acts on $O(   E)$ and 
$O(   E)^{\op{Gal}(   E/   K_0)}=O(   K_0)$. 
Notice that  
$O(   K_0)=\underset{M}{\varprojlim}\, W_M(k_0)((\bar t_0))$  
is the completion of 
the ring of formal Laurent series $W(k)((\bar t_0))$. 

Set $O_{sep}=O(   K_0^{sep})$.

The system of lifts 
$O(   E)$, $   E\subset K_0^{sep}$, can be extended to the system 
of lifts of all extensions of $   K_0$ in $K_0^{alg}$. Indeed, note that  
$   K_0^{alg}=\bigcup _{n\geqslant 0}K_0(t_n)^{sep}$, 
where $t_n^{p^n}=t_0$. Then $t_n$ gives the $p$-basis 
$\{1,t_n,\dots ,t_n^{p-1}\}$ for all 
separable extensions $E$ of $K_0(t_n)$ in 
$K_0^{alg}$ and we 
obtain (as earlier) 
the corresponding lifts $O(   E)$. The system of lifts $O(   E)$ 
is functorial in  
$   E\subset    K_0^{alg}$ 
(use that any separable extension $   E_n$ of $   K_0(t_n)$ 
appears uniquely as the composite $   E_0   K_0(t_n)$, 
where $   E_0/   K_0$ is separable). 
In particular, 
consider $   K_0^{rad}=\bigcup _{N\in\N }   K_0^{ur}(t_0^{1/N})$. 
Then for  
the above defined lift of $   K_0^{rad}$ we have 
$O(   K_0^{rad})=\bigcup _{N\in\N }O(   K_0^{ur})[\bar{t_0}^{1/N}]$, 
where $   K_0^{ur}=\bar k_0((t_0))$ is the maximal unramified extension of $   K_0$.

\subsection{Equivalence of the categories of $p$-groups and 
Lie algebras,\,  \cite{La}} \label{S1.3} 

Let $L$ be a finitely generated Lie $\Z _p$-algebra 
of nilpotent class $<p$, 
i.e.\,the 
ideal of $p$-th commutators $C_p(L)$ of $L$ is equal to zero. 
Let $A$ be an enveloping algebra  of $L$. Then the elements of 
$L\subset A$ generate the augmentation ideal $J$ of $A$.  
There is a morphism of $\Z _p$-algebras $\Delta :A\To A\otimes A$ 
uniquely determined by the 
condition $\Delta (l)=l\otimes 1+1\otimes l$ for all $l\in L$. 
Then the set ${\exp}(L)\,\op{mod}J^p$ is identified with the group of all 
''diagonal elements modulo degree $p$`` consisting of  
$a\in 1+J\,\op{mod}\,J^p$ such that 
$\Delta (a)
\equiv a\otimes a\,\op{mod}(J\otimes 1+1\otimes J)^p$.   
\medskip  

In particular,  there is a natural embedding 
$L\subset A/J^p$ and  
the identity  
$${\exp}(l_1)\cdot 
{\exp}(l_2)\equiv {\exp}(l_1  \circ l_2)\,\op{mod}\,J^p$$ 
induces 
the Campbell-Hausdorff composition law    
$$(l_1,l_2)\mapsto l_1  \circ l_2=
l_1+l_2+\frac{1}{2}[l_1,l_2]+\dots ,\ \ l_1,l_2\in L\, .$$  
This composition law provides the set $L$ with 
a group structure. We denote this group by $G(L)$. 
Clearly $G(L)\simeq {\exp}(L)\,\op{mod}\,J^p$.

With the above notation the functor $L\mapsto G(L)$ 
determines the  
equivalence of the categories of finitely generated 
$\Z _p$-Lie algebras and profinite $p$-groups of nilpotence class $<p$. 
Note that a subset 
$I\subset L$ is an ideal in $L$ iff 
$G(I)$ is a normal subgroup in $G(L)$.

\subsection{(Lie)-condition} \label{S1.4} 
For any finite field extension $   K$ of $   K_0$ in $   K_0^{sep}$, 
let $\op{M\Gamma }_{   K}$ be the category of finitely generated 
$\Z _p$-modules $   H$ with continuous left action of $\Gamma _{   K}$. 
Each element  $h\in H$ is defined over some finite extension $K(h)$ of $K$. 
In some sense the family of these fields determines ''arithmetic`` properties of $H$. 
More detailed information about the fields $K(h)$ 
can be obtained from the knowledge of the images of  
the ramification subgroups in upper numbering $\Gamma _{   K}^{(v)}$, $v>0$, in 
$\op{Aut}_{\Z _p}   H$. 
For example, the minimal number $v_0(H)\in\Q $ such that 
all $\Gamma _{K}^{(v)}$ with $v>v_0(   H)$  
act trivially on $H$ provides us with    
upper estimates for the discriminants of the fields of definition 
of $h\in H$, cf.\,\cite{Ab6}.  

Let $   H_0\in\op{M\Gamma }_{   K_0}$ and 
let $\pi _{H_0}:\Gamma _{K_0}\To \op{Aut}_{\Z _p}   H_0$ be the group 
homomorphism which determines the $\Gamma _{   K_0}$-module structure on $   H_0$. 
Consider the full subcategory $\op{M\Gamma }_{   K_0}^{\op{Lie}}$ in  
$\op{M\Gamma }_{   K_0}$ which consists of modules $   H_0$ satisfying the 
following condition: 
\medskip

{\bf (Lie)}\ {\it The image $I(   H_0):=\pi _{   H_0}(\c I)\subset 
\Aut _{\Z _p}(H_0)$  of the wild inertia 
subgroup $\c I\subset\Gamma _{K_0}$ appears in the form 
${\exp}(L(H_0))$, where 
$L(H_0)\subset\op{End}_{\Z _p}H_0$ is a Lie subalgebra such that 
$L(H_0)^p=0$.}
\medskip

The condition $L(   H_0)^p=\{l_1\ldots l_p\ |\ l_1,\ldots ,l_p\in L(   H_0)\}=0$ 
(the product is taken in $\op{End}_{\Z _p}   H_0\supset L(   H_0)$) 
implies that $L(   H_0)$ is a finitely generated 
nilpotent $\Z _p$-algebra Lie of nilpotence class $<p$. 
This gives the group isomorphism $\exp :G(L(   H_0))
\simeq I(H_0)$. 
Note that 
any normal subgroup of $I(H_0)$ appears in the form $\exp G(J)$, 
where $J$ is a Lie ideal of $L(H_0)$.

\begin{remark}
 If $p   H_0=0$ and $\op{dim}_{\F _p}   H_0\leqslant p$ 
 then $   H_0$ is of the Lie type. 
 \end{remark}

\subsection{The first main result: the characteristic $p$ case} \label{S1.5}

Suppose $   H_0\in\op{M\Gamma }_{K_0}^{\op{Lie}}$. 
Our target is to determine for all $v> 0$,  
the images 
$\pi _{   H_0}(\Gamma _{   K_0}^{(v)})$ of the ramification subgroups 
$\Gamma _{K_0}^{(v)}$ via an explicit construction 
of the ideals $L(H_0)^{(v)}\subset L(H_0)$ such that 
$\exp (L(H_0)^{(v)})=
\pi _{H_0}(\Gamma _{K_0}^{(v)})$. 

Our  approach uses  
Fontaine's ``analytical'' description of the 
Galois modules  
$H_0\in\op{M\Gamma }_{   K_0}^{\op{Lie}}$ in 
terms of etale $(\phi ,O(K_0))$\,-modules $M(H_0)$. 
A geometric nature of $M(H_0)$ is supported 
by the existence of an analogue of the classical 
connection  
$\nabla :M(H_0)\To  M(H_0)\otimes _{O(K_0)}
\Omega ^1_{O(K_0)}$, 
cf.\,  \cite{Fo}. 
(This map is uniquely characterized by the condition 
$\nabla\cdot\phi =(\phi \otimes\phi )\cdot \nabla $\,.) 
The required information about the behaviour of ramification subgroups  
can be then extracted from some differential forms 
$$ \wt{\Omega }[N]\in M(H_0)_{O(K_0^{rad})}
\otimes_{O(K_0)}
\Omega ^1_{O(K_0)}$$ 
The construction of these differential forms 
is given completely in terms of the above connection $\nabla $ 
and can be explained as follows.

Let $   K\subset    K_0^{sep}$ be a fixed tamely ramified 
finite extension of $K_0$ such that 
$\pi _{H_0}(\Gamma _{K})=I(H_0)$. 
Then $H:=H_0|_{\Gamma _{K}}$ can be described via an etale 
$(\phi ,O(K))$-module 
$M(H)=M(H_0)\otimes _{O(K_0)}O(K)$. 
Recall that $M(H)=
(H\otimes _{\Z _p}O_{sep})^{\Gamma _{K}}$ is a finitely generated 
$O(K)$-module 
with a $\sigma $-linear morphism 
$\phi :M(H)\To M(H)$ such that the image 
$\phi (M(H))$ generates $M(H)$ over $O(K)$. 
This allows us to identify 
the elements of $H$ with a set of $O_{sep}$-solutions 
of a suitable  system of equations with coefficients in $O(K)$. 

We establish below the construction of $M(H)$ by introducing a  
$\Z _p$-linear embedding $\c F:   H\To M( H)$ which induces 
by extension of scalars the identification 
$H_{O(K)}\simeq M(H)$. (We denote this identification by  
the same symbol $\c F$.) 

Now let $\wt{B}$ be (a inique) $O(K)$-linear operator on 
$M(H)$ such that 
for any $m\in \c F(H)$, $\nabla (m)=\wt{B}(m)d{\bar t}/\bar t$. 
Then for every $N\in\Z _{\geqslant 0}$ we introduce 
the differential forms 
$$\wt{\Omega }[N]=\phi ^N\wt{B}\phi ^{-N}d\bar t/\bar t\in
\op{End}M(H)_{O(K^{rad})}\otimes _{O(K)}\Omega ^1_{O(K)}\,$$

Now we can use the identification $\c F:H_{O(K)}\simeq 
M(H)$ to 
obtain the corresponding differential forms $\Omega [N]$ on 
$\op{End}(H)_{O(K^{rad})}$ and to verify that 
$$\Omega [N]\in L(H)_{O(K^{rad})}
\otimes _{O(K)}\Omega ^1_{O(K)}=
L(H_0)_{O(K_0^{rad})}
\otimes _{O(K_0)}\Omega ^1_{O(K_0)}\, .$$
\begin{remark}
 Our differential forms will usually appear in the form 
 $\Omega =F\cdot d\bar t_0/\bar t_0$, where $F\in L(H)_{K_0^{rad}}$. 
 Then we set by definition 
 $$(\id _{L(H)}\otimes \sigma )\Omega =
 (\id _{L(H)}\otimes\sigma )F\cdot d\bar t_0/\bar t_0\,.$$
\end{remark}

Our first main result 
can be stated as follows. 

\begin{Thm} \label{T1.1} Suppose 
 $H_0\in\op{M\Gamma }_{K_0}^{\op{Lie}}$ is finite. 
 Then there is $N_0(H_0)\in\N $ 
 such that 
 for any (fixed) $N\geqslant N_0(H_0)$ 
 the following propery holds: 
 \medskip 
 
 if $(\id _{L(H)}\otimes\sigma ^{-N})\Omega [N]=
 \sum _{r\in\Q }\bar t_0^{\,-r}l_{r}d\bar t_0/\bar t_0$,  
 where all $l_r\in L(   H_0)_{W(\bar{k}_0)}$,  
 then the ideal $L(   H_0)^{(v)}$ is the minimal ideal in 
 $L(   H_0)$ such that for all $r\geqslant v$, 
 $l_r\in L(   H_0)^{(v)}_{W(\bar{k}_0)}$. 
\end{Thm}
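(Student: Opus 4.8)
The plan is to reduce the statement to the explicit description of the ramification filtration furnished by the nilpotent Artin--Schreier theory of \cite{Ab1, Ab2, Ab3}, and to show that the canonical datum appearing there is exactly what the forms $\Omega[N]$ extract from the connection $\nabla$. As a first step I would replace $H_0$ by $H=H_0|_{\Gamma_{K}}$: since $K/K_0$ is tamely ramified with $\pi_{H_0}(\Gamma_K)=I(H_0)$, the wild inertia subgroups of $\Gamma_{K_0}$ and $\Gamma_K$ coincide, and the Herbrand function of $K/K_0$ identifies, on the wild part, $\Gamma_K^{(v)}$ with $\Gamma_{K_0}^{(v)}\cap\Gamma_K$. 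Together with the identification $L(H)_{O(K^{rad})}\otimes_{O(K)}\Omega^1_{O(K)}=L(H_0)_{O(K_0^{rad})}\otimes_{O(K_0)}\Omega^1_{O(K_0)}$ noted in \S\ref{S1.5}, this shows that it suffices to recover the ideals $L(H)^{(v)}=L(H_0)^{(v)}$ cut out by $\pi_{H_0}(\Gamma_K^{(v)})$ from the data carried by $M(H)$.

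The second step is to recall the shape of the Artin--Schreier datum. By \cite{Ab1, Ab2, Ab3}, the homomorphism $\Gamma_K\to G(L(H))$ governing the action on $H$ is described, on wild inertia, by a canonical element $f\in G(L(H)\otimes_{\Z_p}O_{sep})$ and an element $e\in L(H)\otimes_{\Z_p}O(K)$ with $\sigma(f)=e\circ f$, and $L(H)^{(v)}$ is the minimal ideal $J\subseteq L(H)$ for which the image of $e$ in $(L(H)/J)\otimes_{\Z_p}O(K^{rad})$, put in canonical form $\sum_{r\in\Q}\bar t_0^{-r}l_r\,d\bar t_0/\bar t_0$, has $l_r=0$ for all $r\geqslant v$. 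The point of contact with Fontaine's module is that this $e$ is the ``matrix of $\phi$'': if $\mathcal F(h_1),\dots,\mathcal F(h_n)$ is the image under $\mathcal F$ of a $\Z_p$-basis of $H$, then, since $\phi$ acts on $M(H)=(H\otimes_{\Z_p}O_{sep})^{\Gamma_K}$ through $\sigma$ on the second factor and one may take $\mathcal F(h)=f\cdot h$, one gets $\phi(\mathcal F(h_i))=\exp(e)(\mathcal F(h_i))$.

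The third and crucial step is to extract $e$ from $\nabla$ using $\nabla\cdot\phi=(\phi\otimes\phi)\cdot\nabla$. In the basis $\{\mathcal F(h_i)\}$ the operator $\wt B$ and the connection matrix $\Gamma$ of $\nabla$ agree (both are determined by $\nabla$ on $\mathcal F(H)$), and the compatibility of $\nabla$ with $\phi$ becomes a relation of the form $\sigma(\Gamma)=A^{-1}\Gamma A+A^{-1}\cdot(\bar t_0\partial_{\bar t_0})(A)$, with $A=\exp(e)$ the matrix of $\phi$ (up to a factor of $p$ coming from the normalisation of $\phi$ on $\Omega^1$). Iterating this relation, and noting that $(\id_{L(H)}\otimes\sigma^{-N})\wt\Omega[N]$ has matrix $\sigma^{-N}(A_{(N)})\cdot\Gamma\cdot\sigma^{-N}(A_{(N)})^{-1}$ with $A_{(N)}=A\,\sigma(A)\cdots\sigma^{N-1}(A)$ the matrix of $\phi^N$, one rewrites $(\id\otimes\sigma^{-N})\wt\Omega[N]$ as a universal Campbell--Hausdorff-type expression in $e,\sigma^{-1}e,\dots,\sigma^{-N+1}e$ and their $\bar t_0$-logarithmic derivatives. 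Two finiteness inputs make this expression stabilise: $L(H_0)$ is killed by a power $p^{m}$ of $p$ because $H_0$ is finite, and it is nilpotent of class $<p$ by the (Lie)-condition; so for every $N$ exceeding a bound $N_0(H_0)$ depending only on $m$, on the nilpotence class, and on an a priori estimate for $v_0(H_0)$, the extra terms vanish. A direct comparison then shows that, after pushing to $L(H_0)/J$, the form $(\id\otimes\sigma^{-N})\wt\Omega[N]$ has vanishing $\bar t_0^{-r}$-coefficients for $r\geqslant v$ precisely when $e$ does; transporting through $\mathcal F$ and invoking the second step yields the description of $L(H_0)^{(v)}$, and the equivalence of \S\ref{S1.3} gives $\exp(L(H_0)^{(v)})=\pi_{H_0}(\Gamma_{K_0}^{(v)})$.

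The main obstacle is the comparison inside the third step: one must prove that the Frobenius iteration $\phi^N(\cdot)\phi^{-N}$ followed by the normalisation $\sigma^{-N}$ inverts the passage $e\mapsto\Gamma$ induced by the connection \emph{up to terms lying in ideals already generated by coefficients of smaller pole order}, so that the ``minimal ideal'' characterisation is preserved. Quantifying these error terms --- the Campbell--Hausdorff corrections and the contribution of the non-$p$-torsion part of $H_0$ --- and fixing the threshold $N_0(H_0)$ beyond which they all disappear modulo the relevant ideal, is where the real work lies; the remaining steps are either formal or direct citations of \cite{Ab1, Ab2, Ab3}.
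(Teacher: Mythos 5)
There is a genuine gap, and it sits exactly at the point you declare to be a ``direct citation'': your Step 2 misquotes the main input from \cite{Ab1,Ab2,Ab3}. The ramification ideal $L(H)^{(v)}$ is \emph{not} the minimal ideal $J$ modulo which the coefficients of the special element $e=\sum_{a}\bar t^{-a}l_{a0}$ with pole order $\geqslant v$ vanish. The explicit description (Theorem \ref{T3.2}, from \cite{Ab3}) is that, for $N\gg 0$ and modulo $p^M$, $\c L^{(v)}$ is the minimal ideal whose extension of scalars contains the elements $\c F^0_{\alpha ,-N}=\sum a_1\eta(\bar n)p^{n_1}[D_{\bar a,\bar n}]$ with $\gamma(\bar a,\bar n)=\alpha\geqslant v$; these are weighted sums of iterated commutators in which the individual indices $a_i$ may be small even though $\alpha\geqslant v$, so the resulting ideal is in general strictly different from the ideal generated by the $l_{a0}$ with $a\geqslant v$. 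Because of this, the pivotal equivalence claimed at the end of your Step 3 --- that $(\id\otimes\sigma^{-N})\wt\Omega[N]$ has vanishing $\bar t_0^{-r}$-coefficients for $r\geqslant v$ modulo $J$ ``precisely when $e$ does'' --- is false, and the whole comparison collapses. Note also that if your Step 2 were correct, Theorem \ref{T1.1} would be nearly vacuous: one could read the filtration off $e$ itself and the forms $\Omega[N]$, the connection and the Frobenius twists would be pointless.

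Your computational framework in Step 3 is, in spirit, what the paper actually does, but it is aimed at the wrong target. The paper proves (Proposition \ref{P2.4}) that $B=\sum_{n\geqslant 0}p^n\op{Ad}(D^{(n)})\sigma^n_{L(H)}(C)$ with $C=-(\id\otimes\partial_{\bar t})A\cdot A^{-1}$, then computes the same expression universally in the free-modulo-$C_p$ Lie algebra $\c L$ with the special element $e_{<p}$ (Proposition \ref{P3.3}), obtaining $\op{Ad}\,\sigma^{-N}(D^{(N)}_{<p})(B_{<p})=\sum_{\alpha}\c F^0_{\alpha,-N}\bar t^{-\alpha}$; Lemma \ref{L4.1} identifies this twisted conjugate with $(\id\otimes\sigma^{-N})\Omega[N]$ under the specialization $D_{an}\mapsto l_{an}$, $e_{<p}\mapsto e$, $B_{<p}\mapsto B$. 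At that point Theorem \ref{T3.2} says \emph{verbatim} that the minimal ideal containing the coefficients with $\alpha\geqslant v$ is the image of $\c L^{(v)}$, i.e.\ $L(H_0)^{(v)}$, and the choice of $N_0(H_0)$ comes from the bound $\wt N(v,M)$ there (together with the finitely many jumps of the filtration on a finite module), not from a stabilisation of Campbell--Hausdorff error terms as you suggest. Your Step 1 (tame descent $K/K_0$, rescaling by $e_0$) does match the paper's final reduction. To repair the proposal you must replace your Step 2 by the correct statement of the ramification-ideal generators $\c F^0_{\alpha,-N}$ and show that the coefficients of the twisted form are exactly their images --- which is the actual content of Sections \ref{S3}--\ref{S4}.
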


 \begin{Cor} \label{C1.2} 
  If $v_0(H_0)=\max\{r\ |\ l_r\ne 0\}$ 
 then the ramification subgroups 
 $\Gamma _{K_0}^{(v)}$ act trivially on $H_0$ iff $v>v_0(H_0)$. 
 \end{Cor}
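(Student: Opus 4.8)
The Corollary follows at once from Theorem~\ref{T1.1}: for $v>0$ one has $\Gamma_{K_0}^{(v)}\subset\mathcal I$, hence $\pi_{H_0}(\Gamma_{K_0}^{(v)})=\exp\big(L(H_0)^{(v)}\big)$, which is trivial precisely when $L(H_0)^{(v)}=0$; by the Theorem this ideal is the minimal one containing all $l_r$ with $r\geqslant v$, so $L(H_0)^{(v)}=0$ iff $l_r=0$ for every $r\geqslant v$, i.e.\ iff $v>\max\{r\mid l_r\ne 0\}=v_0(H_0)$. Thus the whole content is Theorem~\ref{T1.1}, which I would prove as follows.

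\emph{Reduction to $K$.} First I pass to the tame finite extension $K$ with $\pi_{H_0}(\Gamma_K)=I(H_0)$: over $K$ the Galois action factors through $\exp L(H_0)$ and $M(H)=(H\otimes_{\Z_p}O_{sep})^{\Gamma_K}$ is a genuine etale $\phi$-module. By the compatibility recorded after the definition of $\wt\Omega[N]$ the forms attached to $H$ and to $H_0$ coincide, and since the coefficient data $\bar t_0$, $O(K_0^{rad})$ are retained throughout the construction, the tame Herbrand transition $\varphi_{K/K_0}$ is automatically built into the $l_r$; so it is enough to read $\{L(H)^{(v)}\}\subset L(H_0)=L(H)$ off $\wt\Omega[N]$ over $K$.

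\emph{Period and connection.} The inclusion $\mathcal F\colon H\hookrightarrow M(H)$ together with $\phi$ produces a period automorphism $X$ with $\log X\in L(H_0)_{O_{sep}}$, characterised by $X\big(\mathcal F(H)\otimes_{O(K)}O_{sep}\big)=H\otimes_{\Z_p}O_{sep}$, a $\phi$-relation coming from the $\phi$-structure of $M(H)$, and $\pi_{H_0}(\gamma)=X^{-1}\gamma(X)$ (Campbell--Hausdorff product on $\exp L(H_0)$) for all $\gamma\in\Gamma_K$; so the homomorphism $\Gamma_K\to I(H_0)$ is encoded by $X$ alone. Because the trivial connection on $H\otimes_{\Z_p}O_{sep}$ induces $\nabla$ on $M(H)$, the relation $\nabla\phi=(\phi\otimes\phi)\nabla$ forces $\nabla$, on $\mathcal F(H)$, to be the logarithmic derivative of $X$ along $\bar t$; hence $\wt B$ is, up to the identification $\mathcal F$ and a sign, $X^{-1}\bar t\,(dX/d\bar t)$, and $\wt\Omega[N]=\phi^N\wt B\phi^{-N}\,d\bar t/\bar t$ is its $N$-th Frobenius twist.

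\emph{Comparison with the explicit ramification theory, and the limit.} The nilpotent Artin--Schreier description of \cite{Ab3,Ab13} attaches to the homomorphism $\Gamma_K\to G(L(H_0))$ an admissible element $e$ with poles in $\bar t_0$, satisfying $\sigma(X)=X\circ e$, whose negative-order part is in canonical (reduced) form and whose coefficients of $\bar t_0^{-r}$ are exactly the $l_r$ of Theorem~\ref{T1.1}, with $L(H_0)^{(v)}$ the minimal ideal through which the $l_r$, $r\geqslant v$, factor. Differentiating $\sigma(X)=X\circ e$ and comparing with the characterisation of $\nabla$ yields a recursion expressing $(\id\otimes\sigma^{-N})\wt\Omega[N]$ through $e$ and its $\sigma$-iterates; the point of the twist by $\phi^N$ undone by $\sigma^{-N}$ is that this is precisely the iteration that drives $\wt B$ to the admissible $e$ — the $\sigma$-reducible and the regular parts being progressively expelled from the bounded-pole range — so that for $N\geqslant N_0(H_0)$, with $N_0$ governed by the largest pole order occurring and by the nilpotence length of $L(H_0)$, the truncation has stabilised on the coefficients of $e$. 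I expect this last step — making the stabilisation effective, i.e.\ matching the geometric $\wt B$ to the arithmetically normalised $e$ while simultaneously controlling the Campbell--Hausdorff tail (harmless since $L(H_0)^p=0$), the reducible components, and the fractional exponents coming from $K^{rad}$ — to be the principal obstacle.
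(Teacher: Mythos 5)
Your deduction of the Corollary from Theorem \ref{T1.1} is correct and is exactly the paper's (implicit) proof: for $v>0$ the subgroup $\Gamma _{K_0}^{(v)}$ lies in the wild inertia, so $\pi _{H_0}(\Gamma _{K_0}^{(v)})=\exp (L(H_0)^{(v)})$, and by the theorem $L(H_0)^{(v)}=0$ precisely when $l_r=0$ for all $r\geqslant v$, i.e. when $v>v_0(H_0)$.

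Since you appended a sketch of Theorem \ref{T1.1} itself, one caveat is worth recording: your final ``comparison'' step misstates the mechanism and, as written, is false. The elements $l_r$ occurring in $(\id _{L(H)}\otimes\sigma ^{-N})\Omega [N]$ are \emph{not} the coefficients of the special element $e$, and the ramification ideals are \emph{not} the minimal ideals containing the coefficients of $e$ of pole order $\geqslant v$. In the paper, $B$ is expressed through $A=\exp (-e)$ by Proposition \ref{P2.4}; the twisted form is then identified (Proposition \ref{P3.3} together with Lemma \ref{L4.1}) with $\sum _{\alpha }\mathcal{F}^0_{\alpha ,-N}\bar t^{\,-\alpha }$, where the $\mathcal{F}^0_{\alpha ,-N}$ are the weighted iterated commutators $[D_{\bar a,\bar n}]$ built from the coefficients $l_{a0}$ of $e$ and their $\sigma $-twists; and the fact that the ideals generated by these elements (for $N$ large) coincide with the ramification ideals is Theorem \ref{T3.2}, quoted from \cite{Ab3} --- it is an external input, not something recovered by the stabilisation recursion from $\sigma (X)=X\circ e$ that your sketch proposes. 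So the Corollary stands on Theorem \ref{T1.1} exactly as you use it, but the appended sketch would not serve as a proof of that theorem.
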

 
 \begin{remark}
  The construction of $\Omega [N]$ almost does not depend on the 
  choice of the tamely ramified 
  finite field extension $   K$ of $   K_0$. It depends essentially on 
  the choice of the uniformising element $t_0$ in $   K_0$ 
  and a compatible system of 
  $\alpha (k)\in W(k)$, where $[k:k_0]<\infty $, such that the trace of 
  $\alpha (k)$ in the field extension $W(k)[1/p]/{   K}_0$ equals 1. 
 \end{remark}
 
 \begin{remark}
  If $H_0$ is not $p$-torsion Theorem \ref{T1.1} can be applied to the 
  factors $H_0/p^M$ and our result describes the structure of 
  the images of $L(H_0)^{(v)}$ in all $L(H_0)/p^M$. 
 \end{remark}
 
 \subsection{The second main result: the mixed characteristic case} \label{S1.6}

Let $E_0$ be a finite field extension of $\Q _p$ with residue field $k_0$ 
and a uniformizing element $\pi _0$. Assume that $E_0$  
contains a 
$p$-th primitive root of unity $\zeta _1$. Consider the category 
$\op{M\Gamma }_{E_0,1}^\op{Lie}$ of finitely generated  
$\F _p[\Gamma _{E_0}]$-modules which satisfy a direct analog of 
the Lie condition from Sec.\,\ref{S1.4}. 

Take the infinite arithmetically profinite 
field extension $\wt{E}_0$ obtained from $E_0$ by joining all 
$p$-power roots of $\pi _0$. Then the theory of 
the field-of-norms functor $X$ provides us with the 
complete discrete valuation field of characteristic $p$, 
$X(\wt{E}_0)=K_0$, which has the same residue field and 
the uniformizing element $t_0$ obtained from the 
$p$-power roots of $\pi _0$. 
The functor $X$ also provides us with the identification of Galois groups 
$\Gamma _{K_0}=\Gamma _{\wt{E}_0}\subset \Gamma _{E_0}$. 

If $H_{E_0}\in\op{M\Gamma }^{\op{Lie}}_{E_0,1}$ then we obtain 
$H_0:=H_{E_0}|_{\Gamma _{K_0}}
\in\op{M\Gamma }_{K_0}^{\op{Lie}}$. 
As earlier, 
take a finite tamely ramified extension $K$ of $K_0$ 
(it corresponds to a unique tamely ramified extension $E$ of $E_0$ 
with uniformizer $\pi $ such that $\pi ^{e_0}=\pi _0$, 
where $e_0$ is the ramification index of $E/E_0$,   
and construct $(\phi ,O(K))$-module $M(H)$. This module 
inherits the action of 
$\op{Gal}(E(\root p\of {\pi })/E)=
\langle\tau _0\rangle ^{\Z /p}$. (Here $\tau _0$ is 
such that $\tau _0(\root p\of {\pi })=
\zeta _1\root p\of {\pi }$.) This situation
was considered in all details in the papers \cite{Ab12, Ab13}. In particular, in those papers 
we gave a characterization of the ``good '' 
lifts $\hat\tau _0$ of $\tau _0$. By definition, 
$\hat\tau _0\in\Gamma _{E}$ is ``good'' if its restriction to $H_{E_0}$ 
belongs to the image of the ramification 
subgroup $\Gamma _{E}^{(e^*)}$. 
Here $e^*:=pe/(p-1)$ and $e=e(E/\Q _p)$ 
is the ramification index of $E/\Q _p$. (This makes sense because 
$\tau _0\in \op{Gal}(E(\root p\of {\pi }/E)^{(e^*)}$.) Note that the field-of-norms functor is 
compatible with ramification filtrations on $\Gamma _{E_0}$ and $\Gamma _{K_0}$. Therefore, 
the knowledge of "good lifts" $\hat\tau _0$ together with Theorem \ref{T1.1} gives a complete description of 
the image of the ramification filtration of $\Gamma _{E_0}$ in $\op{Aut}H_{E_0}$. 

In \cite{Ab13} we proved that the action of $\hat\tau _0$ 
appears from an action of a formal group scheme of order $p$.  
As a result, the lift $\hat\tau _0$ is completely determined by the value 
$d\hat\tau _0(0)\in L(H)$ of its differential at 0, and we can use the characterization of differentials of ``good '' 
lifts from \cite{Ab13}, Theorem 5.1. 

Namely, let us first 
specify our $p$-th root of unity  
$$\zeta _1=1+\sum _{j\geqslant 0}[\beta _j]
\pi ^{(e^*/p)+j}\,\op{mod}\,p\,$$ 
(here all $[\beta _j]$ are the Teichmuller 
representatives of elements from the residue field of $E$).
Then we introduce the power series 
$\omega (t)\in O(K)$ such that 
$$1+\sum _{j\geqslant 0}\beta _j^pt^{e^*+pj}=
\wt{\exp}(\omega (t)^p)\,,$$
(here $\wt{\exp}$ is the truncated exponential). 
The series $\omega (t)^p$ is a kind of approximation of the 
$p$-adic period of the formal multiplicative group, which appears usually in 
explicit formulas for the Hilbert symbol, e.g. \cite{AJ}. In other words, we obtain another 
geometric condition characterizing "arithmetic" of the $\Gamma _{E_0}$-module $H_{E_0}$. 

\begin{Thm} \label{T1.3} 
 The lift $\hat\tau _0$ is ``good'' iff  
 $$d\hat\tau _0(0)\equiv \sum _{m\geqslant 0}
 \op{Res}\,\left (\omega (t)^{p^{m+1}}\Omega [m]\right )
 \,\op{mod}\,L(H)_{k}^{(e^*)}\,.$$
\end{Thm}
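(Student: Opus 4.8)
The plan is to reduce Theorem \ref{T1.3} to the characteristic $p$ picture already developed (Theorem \ref{T1.1} together with the analysis of \cite{Ab13}), and then to match the two sides of the congruence via an explicit computation with the $p$-adic period series $\omega(t)^p$. First I would recall the setup from \cite{Ab12,Ab13}: the cyclic extension $E(\root p\of\pi)/E$ provides the element $\tau_0$, and a lift $\hat\tau_0\in\Gamma_E$ is ``good'' precisely when $\hat\tau_0|_{H_{E_0}}\in\pi_{H_{E_0}}(\Gamma_E^{(e^*)})$. Since $\hat\tau_0$ acts through a formal group scheme of order $p$ (by \cite{Ab13}), its action on $H$ is completely encoded by the differential $d\hat\tau_0\in L(H)$, and goodness is a condition on the class of $d\hat\tau_0(0)$ modulo the ramification ideal $L(H)_k^{(e^*)}$. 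So the theorem is equivalent to identifying that class with $\sum_{m\geqslant 0}\op{Res}\!\left(\omega(t)^{p^{m+1}}\Omega[m]\right)$.

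The key steps, in order, would be: (1) Use Theorem 5.1 of \cite{Ab13}, which characterises differentials of good lifts in terms of the data on the $(\phi,O(K))$-module $M(H)$ and the chosen uniformiser; this already expresses $d\hat\tau_0(0)$ modulo $L(H)_k^{(e^*)}$ as a residue-type sum involving the operator $\wt B$ and its Frobenius twists $\phi^m\wt B\phi^{-m}$, i.e. the forms $\Omega[m]$. (2) Unwind the definition of $\zeta_1=1+\sum_j[\beta_j]\pi^{(e^*/p)+j}$ and of the truncated-exponential identity $1+\sum_j\beta_j^p t^{e^*+pj}=\wt\exp(\omega(t)^p)$ to see that $\omega(t)^p$ is the characteristic $p$ shadow of the $p$-adic period of $\mathbb G_m$ attached to $\zeta_1$, and that its higher Frobenius powers $\omega(t)^{p^{m+1}}$ are exactly the weights that appear when one propagates the condition through the $N$ applications of $\phi$ built into $\Omega[m]=\phi^m\wt B\phi^{-m}\,d\bar t/\bar t$. (3) Match residues: since $\op{Res}$ is $\sigma$-semilinear in the appropriate sense and $\Omega[m]$ is a $d\bar t_0/\bar t_0$-form, the sum $\sum_m\op{Res}(\omega(t)^{p^{m+1}}\Omega[m])$ collapses (modulo $L(H)_k^{(e^*)}$, where high-$m$ terms vanish because Frobenius raises valuations past $e^*$) to precisely the expression coming from step (1). (4) Check convergence/finiteness of the sum modulo $L(H)_k^{(e^*)}$: for $m$ large, $\omega(t)^{p^{m+1}}$ has valuation $\geqslant p^{m+1}e^*/p \to\infty$, so only finitely many terms contribute, which also shows the right-hand side is well defined.

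The main obstacle I expect is step (3): reconciling the bookkeeping between, on the one hand, the recursive/Frobenius structure of the good-lift condition as it emerges from the formal group scheme argument of \cite{Ab13}, and on the other hand, the closed form $\sum_m\op{Res}(\omega(t)^{p^{m+1}}\Omega[m])$. Concretely one has to verify that the truncated exponential $\wt\exp$ appearing in the definition of $\omega$ interacts with the connection $\nabla$ (equivalently with $\wt B$ and the relation $\nabla\cdot\phi=(\phi\otimes\phi)\cdot\nabla$) in exactly the way needed so that the $m$-th term of the sum reproduces the contribution of the $m$-th iterate of Frobenius in the \cite{Ab13} formula — this is where the specific normalisation of $\zeta_1$ (its $\pi$-adic expansion starting at $e^*/p$) is essential, and where a subtle miscount of a power of $p$ or a $\sigma$-twist would break the identity. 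A secondary technical point is ensuring the identification is independent (modulo $L(H)_k^{(e^*)}$) of the auxiliary tamely ramified extension $K/K_0$ and of the compatible system of trace-one elements $\alpha(k)$, which follows from the corresponding independence statements accompanying Theorem \ref{T1.1} but should be spelled out.
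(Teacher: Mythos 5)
Your overall route is the same as the paper's: pass to characteristic $p$ via the field of norms, invoke Theorem 5.1 of [Ab13] for the criterion on the differential of a good lift, and then convert that criterion into the residue formula. But the conversion you postpone in step (3) as ``the main obstacle'' is exactly where the proof has content, and the ingredient that turns it into a short computation is one you never name: relation (3.3) of Proposition 3.3 together with the specialisation of Section 4 (Lemma 4.1 and the map $D_{an}\mapsto l_{an}$, $B_{<p}\mapsto B$, $D^{(m)}_{<p}\mapsto D^{(m)}$), which give, with images taken in $L(H)_k$,
$$(\mathrm{id}\otimes\sigma^{-m})\,\Omega[m]=\sum_{\alpha>0}\mathcal F^{0}_{\alpha,-m}\,\bar t^{\,-\alpha}\,\frac{d\bar t}{\bar t}\,,$$
so that $\Omega[m]=\sum_{\alpha}\sigma^{m}(\mathcal F^{0}_{\alpha,-m})\,\bar t^{\,-p^{m}\alpha}\,d\bar t/\bar t$. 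Since $\omega(t)^{p^{m+1}}=\sigma^{m}(\omega^{p})=\sum_{j}\sigma^{m}(A_j)\,t^{\,p^{m}(e^*+pj)}$, the residue picks out exactly $\alpha=e^*+pj$ and gives $\mathrm{Res}\bigl(\omega^{p^{m+1}}\Omega[m]\bigr)=\sum_{j}\sigma^{m}\bigl(A_j\mathcal F^{0}_{e^*+pj,-m}\bigr)$; summing over $m$ reproduces the right-hand side of [Ab13], Theorem 5.1. With this identification in hand there is no delicate interaction between $\widetilde{\exp}$ and $\nabla$ left to check — the connection enters only through the already established expansion of $\Omega[m]$; without it your plan does not close.

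Your step (4) is moreover justified incorrectly. The growth of the valuation of $\omega(t)^{p^{m+1}}$ does not make the high-$m$ terms vanish: the polar order of $\Omega[m]$ grows at the same rate ($\bar t^{\,-p^{m}\alpha}$), so $\mathrm{Res}\bigl(\omega^{p^{m+1}}\Omega[m]\bigr)=\sum_{j}\sigma^{m}(A_j\mathcal F^{0}_{e^*+pj,-m})$ is in general nonzero for every $m$. The sum is finite only modulo $L(H)^{(e^*)}_k$, and for a different reason: since $e^*+pj\geqslant e^*$, the elements $\mathcal F^{0}_{e^*+pj,-m}$ lie in $\mathcal I^{(e^*)}[m]_{W(k)}$, and by Theorem 3.2 the image of $\mathcal I^{(e^*)}[m]$ modulo $p$ coincides with that of $\mathcal L^{(e^*)}$ once $m\gg 0$, so these terms fall into $L(H)^{(e^*)}_k$ and disappear from the congruence. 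This is the argument the paper gives (see the remark following Theorem 1.3); the valuation argument should be replaced by it.
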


\begin{remark}
 Notice that the power series $\omega (t)^p$ has non-zero coefficients 
 only for the powers $t^{e^*+pj}$ and all these exponents $\geqslant e^*$. 
 Therefore, 
 the differential forms $\Omega [m]$ contribute to 
 the right hand side only via the images of 
 $\c F^0_{e^*+pj,-m}t^{-(e^*+pj)}$ in $L(H)_k$. But for $m\gg 0$, these images 
 belong to the images of the ramification ideals  
 $\c L^{(e^*)}_k$ and, therefore, disappear modulo 
 $L(H)_{k}^{(e^*)}$, and the sum in the right hand side is, as a matter of fact, finite.
\end{remark}

\section{$\phi $-module $M(H)$}  \label{S2}

\subsection{Specification of $\log \pi _{H}:
\Gamma _{K}\To G(L(H))$} \label{S2.1} 

As earlier, $H_0\in\op{M\Gamma }_{K_0}^{\op{Lie}}$,  
$K$ is a finite tamely ramified extension of 
$K_0$ in $K_0^{sep}$ such that $\pi_{   H_0}
(\Gamma _{   K})=I(H_0)$, 
$   H=H_0|_{\Gamma _{K}}$. 
Set $L(H)=L(H_0)$, $\pi _{H}=
\pi _{H_0}|_{\Gamma _{K}}$.

Consider the continuous group epimorphism 
$l_{H}:=\log (\pi _{H}):\Gamma _{K}\To G(L(   H))$. 
Since the $p$-group $G(L(   H))$ has nilpotence class $<p$ this epimorphism 
can be described in terms of the covariant version 
of the nilpotent Artin-Schreier theory 
from \cite{Ab2}. 
Namely, 
there are $e\in L(   H)_{O(   K)}$  and 
$f\in L(   H)_{O_{sep}}$ such that 
$(\id _{L(H)}\otimes\sigma )(f)=
e  \circ f$ and for any 
$\tau\in\Gamma _{   K}$, $l_{   H}(\tau )=
(-f)  \circ (\id_{L(H)}\otimes\tau )f$. 

It could be easily verified that $l_{H}$ is a group homomorphism. 
Indeed, 
$$l_{H}(\tau _1\tau _2)=
(-f)  \circ (\id_{L(H)}\otimes \tau _1\tau _2)f=$$
$$(-f)  \circ (\id_{L(H)}\otimes\tau _1)f  \circ (-f)  \circ 
(\id_{L(H)}\otimes\tau _2)f=
l_{H}(\tau _1)\circ l_{H}(\tau _2)\, ,$$
because $(\id_{L(H)}\otimes\tau _1)l_{H}(\tau _2)=l_{H}(\tau _2)$ 
\medskip 

{\bf Notation.} We will use below the following notation: $\sigma _H=\id _H\otimes\sigma $ and 
$\sigma _{L(H)}=\id _{L(H)}\otimes \sigma $. For example, if 
$u=\sum _{\alpha }h_{\alpha }\otimes o_{\alpha }$, where 
all $h_{\alpha }\in H$ and $o_{\alpha }\in O(K)$ then 
$\sigma _H(u)=\sum _{\alpha }
h_{\alpha }\otimes \sigma (o_{\alpha })$. Or, if $X$ is a linear operator 
on $L(H)_{O(K)}$ then $\sigma _{L(H)}X$ is also a linear operator such that 
$$\sigma _{L(H)}X\left (\sum _{\alpha }h_{\alpha }\otimes o_{\alpha }\right )=
\sum _{\alpha }\sigma _H(X(h_{\alpha }))(1\otimes o_{\alpha })\,.$$
In addition, $\c X:=X\cdot \sigma _H$ is a unique 
sigma linear operator 
such that $\c X|_H=X|_H$, and we have the following identity 
$\sigma _H\cdot X=\sigma _{LH}(X)\cdot \sigma _H$. 
If there is no risk of confusion we will use just the notation $\sigma $. 
\medskip 

\begin{remark} 
Originally we developed in \cite{Ab2}  the 
contravariant version of the nilpotent Artin-Schreier theory, 
cf.\,the discussion 
in \cite{Ab11a}. The contravariant version 
uses similar relations $\sigma _{L(H)}(f)=
f  \circ e$ and the map $l_{   H}$ was defined via 
$\tau\mapsto (\id _{L(H)}\otimes\tau ) f  \circ (-f)$. In this case $l_{   H}$ 
determines the group homomorphism from $\Gamma _{K}$ to 
the opposite group $G^0(L(   H))$ (this group is isomorphic to 
$G(L(   H))$ via the map $g\mapsto g^{-1}$). 
The results from the papers \cite{Ab1, Ab2, Ab3} 
were obtained in terms of the contravariant version, but the  
results from \cite{Ab12, Ab13, Ab14, Ab15} used 
the covariant version.  
We can easily switch from one theory to another 
via the automorphism $-\op{id}_{L(   H)}$. 
\end{remark}

We consider $O_{sep}$ as a left $\Gamma _{   K}$-module 
via the action $o\mapsto \tau (o)$ with $o\in O_{sep}$ 
and $\tau\in\Gamma _{   K}$. This corresponds to our 
earlier agreement about the left action of 
the elements of $\Gamma _{   K}$ as endomoprhisms of $O_{sep}$.
As  a result we obtain the left $\Gamma _{   K}$-module 
structure on $H_{O_{sep}}$ by the use of the (left) action of 
$\Gamma _{   K}$ on $H$ via 
$h\mapsto l_{   H}(\tau )(h)$. 
 
Because 
$L(   H)_{O_{sep}}\subset \op{End}
_{O_{sep}}(   H_{O_{sep}})$ we can introduce for any 
$h\in H$, 
$\c F(h):=\exp (-f)(h)\in H_{O_{sep}}$. 

\begin{Prop}\label{P2.1} For any $h\in H$,  
 $\c F(h)\in (   H_{O_{sep}})^{\Gamma _{   K}}$.  
\end{Prop}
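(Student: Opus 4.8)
\textbf{Proof plan for Proposition~\ref{P2.1}.} The plan is to verify directly that $\c F(h)=\exp(-f)(h)$ is fixed by every $\tau\in\Gamma_K$, using the defining relation $l_H(\tau)=(-f)\circ(\sigma_{L(H)}\otimes\tau)f$ — sorry, $l_H(\tau)=(-f)\circ(\id_{L(H)}\otimes\tau)f$ — together with the way $\tau$ acts on $H_{O_{sep}}$. First I would fix $\tau\in\Gamma_K$ and $h\in H$ and compute $\tau(\c F(h))$. Since the action of $\tau$ on $H_{O_{sep}}$ is the twisted one (it acts on $H$ through $h\mapsto l_H(\tau)(h)$ and on $O_{sep}$ through the field action), applying $\tau$ to $\exp(-f)(h)$ produces $(\id_{L(H)}\otimes\tau)(\exp(-f))$ applied to $l_H(\tau)(h)$. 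The key point is that $(\id_{L(H)}\otimes\tau)(\exp(-f))=\exp\bigl((\id_{L(H)}\otimes\tau)(-f)\bigr)$, because $\exp$ is defined by a universal power series with coefficients that $\tau$ fixes.

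Next I would translate everything into the Campbell--Hausdorff group $G(L(H))$ via the identification $\exp:G(L(H))\xrightarrow{\sim}I(H_0)$, so that composition of the operators $\exp(x)$ corresponds to the $\circ$-law on $L(H)$. Under this dictionary, $\tau(\c F(h))=\exp\bigl((\id_{L(H)}\otimes\tau)(-f)\bigr)\cdot\exp(l_H(\tau))\,(h)$, and the product of these two operators is $\exp\bigl((\id_{L(H)}\otimes\tau)(-f)\circ l_H(\tau)\bigr)$. Now substitute $l_H(\tau)=(-f)\circ(\id_{L(H)}\otimes\tau)f$: the exponent becomes
$$
(\id_{L(H)}\otimes\tau)(-f)\circ(-f)\circ(\id_{L(H)}\otimes\tau)f=(\id_{L(H)}\otimes\tau)(-f)\circ(-f)\circ(\id_{L(H)}\otimes\tau)(f),
$$
and using that $(-x)\circ(-y)=-(y\circ x)$ together with $(\id_{L(H)}\otimes\tau)(-f)\circ(\id_{L(H)}\otimes\tau)(f)=(\id_{L(H)}\otimes\tau)\bigl((-f)\circ f\bigr)=0$ one collapses this to $-f$. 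Hence $\tau(\c F(h))=\exp(-f)(h)=\c F(h)$, as required.

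I would then note that $\c F(h)$ being $\tau$-invariant for all $\tau$ means $\c F(h)\in(H_{O_{sep}})^{\Gamma_K}$, and since the $\Gamma_K$-action is continuous and $L(H)$ is finite this fixed-point set is exactly over the appropriate subring; no extra convergence issue arises because $f\in L(H)_{O_{sep}}$ with $L(H)^p=0$, so $\exp(-f)$ is a genuine finite-degree polynomial expression and all manipulations above take place in the truncated enveloping algebra modulo $J^p$. The main obstacle I anticipate is bookkeeping the interaction between the two left actions (on $H$ via $l_H(\tau)$ and on $O_{sep}$ via the field automorphism) and making sure the $\circ$-composition is applied in the correct order — in particular verifying the identity $(\id_{L(H)}\otimes\tau)l_H(\tau')=l_H(\tau')$ for $\tau'\in\Gamma_K$ is needed exactly as in the homomorphism check for $l_H$ already given in the text, and the same observation (that $l_H(\tau')$ has coefficients in $O(K)$, hence is $\tau$-fixed for $\tau\in\Gamma_K$) is what makes the cancellation go through. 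Everything else is a routine computation in $G(L(H))$.
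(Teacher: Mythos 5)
Your overall strategy is the same as the paper's (compute $\tau (\c F(h))$ directly from $l_H(\tau )=(-f)\circ (\id _{L(H)}\otimes\tau )f$ and Campbell--Hausdorff), but the execution fails at exactly the point you flagged as ``bookkeeping''. Since $\tau$ acts diagonally on $H_{O_{sep}}$ (through $\pi _H(\tau )=\exp (l_H(\tau ))$ on $H$ and through the field action on $O_{sep}$), writing $\exp (-f)=\sum _\alpha X_\alpha \otimes o_\alpha$ and applying $\tau$ to $\exp (-f)(h)=\sum _\alpha X_\alpha (h)\otimes o_\alpha$ gives $\sum _\alpha \pi _H(\tau )(X_\alpha (h))\otimes \tau (o_\alpha )$, i.e.
$\tau (\c F(h))=(\pi _H(\tau )\otimes \id _{O_{sep}})\bigl[\exp \bigl((\id _{L(H)}\otimes\tau )(-f)\bigr)(h)\bigr]$:
the coefficients of the operator are twisted by $\tau$, the argument remains $h$, and the group element $\pi _H(\tau )$ comes out on the \emph{left} (outermost). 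Your formula $\tau (\c F(h))=\exp \bigl((\id _{L(H)}\otimes\tau )(-f)\bigr)\cdot \exp (l_H(\tau ))(h)$ puts $\pi _H(\tau )$ on the wrong side; it amounts to claiming that conjugation of the operator by the semilinear action $g=\pi _H(\tau )\otimes\tau$ is just the coefficient twist $\id _{L(H)}\otimes\tau$, whereas in fact $gXg^{-1}=(\pi _H(\tau )\otimes 1)\cdot (\id _{L(H)}\otimes\tau )X\cdot (\pi _H(\tau )^{-1}\otimes 1)$, so the two expressions differ unless $\pi _H(\tau )$ commutes with the $X_\alpha$.

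The second step is also invalid: $(\id _{L(H)}\otimes\tau )(-f)\circ (-f)\circ (\id _{L(H)}\otimes\tau )f$ does \emph{not} collapse to $-f$. Using $(\id _{L(H)}\otimes\tau )f=f\circ l_H(\tau )$ it equals the conjugate $(-l_H(\tau ))\circ (-f)\circ l_H(\tau )$, i.e. its exponential is $\pi _H(\tau )^{-1}\exp (-f)\pi _H(\tau )$, which differs from $\exp (-f)$ unless $[l_H(\tau ),f]=0$; you cannot invoke $(\id _{L(H)}\otimes\tau )(-f)\circ (\id _{L(H)}\otimes\tau )f=0$ because the middle factor $-f$ sits between the two and the group $G(L(H)_{O_{sep}})$ is not abelian. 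The two mistakes do not cancel: your intermediate expression is genuinely not $\tau (\c F(h))$, and the collapse you apply to it is false. With the correct action formula no commutation is needed at all: from $(\id _{L(H)}\otimes\tau )(-f)=(-l_H(\tau ))\circ (-f)$ one gets $\tau (\c F(h))=(\pi _H(\tau )\otimes 1)\exp (-l_H(\tau ))\exp (-f)(h)=\exp (-f)(h)=\c F(h)$, which is precisely the paper's proof. So the plan is right, but you must correct the placement of $\pi _H(\tau )$ and delete the commutation step.
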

\begin{proof}
 Suppose $f=\sum _{\alpha }
 l_{\alpha }\otimes o_{\alpha }$, where 
 all $l_{\alpha }\in L(   H)$ and $o_{\alpha }\in O_{sep}$.
 
 If $\tau\in\Gamma _{   K}$ then 
 $$\tau (\c F(h))=(\tau\otimes\id _{O_{sep}})\left (\exp 
 \left (-\sum _{\alpha }l_{\alpha }\otimes 
 \tau (o_{\alpha })\right )(h)\right )=$$
 $$(\tau\otimes\id _{O_{sep}})\left (\exp \left (
 (\id _{L(H)}\otimes\tau )(-f)\right )(h)\right )=
 $$
 $$(\tau\otimes\id _{O_{sep}})\left (\exp 
 \left ((-l_{   H}(\tau ))  \circ (-f)\right )(h)\right )=
 $$
 $$(\tau\otimes\id _{O_{sep}})\left ((\exp 
 (-l_{H}(\tau ))  \cdot \exp (-f))h\right )=
 $$
 $$
 (\tau\otimes\id _{O_{sep}})(\pi _{H}(\tau ^{-1})\otimes 
 \id _{O_{sep}})\c F(h)=$$
 $$=\left (\tau\cdot \pi _H(\tau ^{-1})\otimes \id _{O_{sep}}
 \right )\c F(h)=\c F(h)\,.
 $$
\end{proof}

The elements $e$ and $f$ are not determined uniquely by $l_{   H}$.  
A pair $e'\in L(   H)_{O(   K)}$ and 
$f'\in L(   H)_{O_{sep}}$ give the same group 
epimorphism $l_{   H}$ iff 
there is $x\in L(   H)_{O(   K)}$ such that $e'=
\sigma (x)  \circ e  \circ (-x)$ 
and $f'=x  \circ f$. 

\medskip

\subsection{Special choice of $e\in L(H)_{O(   K)}$} \label{S2.2}

We can always assume (by replacing, if 
necessary, $   K$ by its finite 
unramified extension) that a  
uniformiser $t$ in $   K$ is such that  
$t^{e_0}=t_0$, where $e_0$ is 
the ramification index for $   K/   K_0$. Then 
$O(   K)=\underset{M}{\varprojlim}\, W_M(k)((\,\bar t))$, where 
$\bar t^{\,e_0}=\bar t_0$ and $\bar t$ is the 
Teichmuller representative of $t$.  We denote by $k$ the residue 
field of $   K$ and fix a choice of 
$\alpha _0=\alpha (k)\in W(k)$ such that its trace in the field 
extension $W(k)[1/p]/\Q _p$ equals 1. 

Let $\Z ^+(p):=\{ a\in\N \ |\ \op{gcd}(a,p)=1\}$ and 
$\Z ^0(p)=\Z ^+(p)\cup\{0\}$. 

\begin{definition} 
An element $e\in L(   H)_{O(   K)}$ is {\sc special} if 
$e=\sum _{a\in\Z ^0(p)}\bar t^{-a}l_{a0}$, where 
$l_{00}\in\alpha _0L(   H)$ and 
for all $a\in\Z ^+(p)$, $l_{a0}\in L(   H)_{W(k)}$.
\end{definition}

\begin{Lem} \label {L2.2} 
Suppose $e\in L(   H)_{O(   K)}$. Then there is 
$x\in L(   H)_{O(   K)}$ such that $\sigma (x)  \circ e  \circ (-x)$ 
is {\sc special}.
\end{Lem}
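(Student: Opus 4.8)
The plan is to reduce an arbitrary $e\in L(H)_{O(K)}$ to special form by a sequence of substitutions $e\mapsto \sigma(x)\circ e\circ(-x)$, killing ``bad'' terms of the $\bar t$-adic expansion one at a time, working from the most negative powers upward and then cleaning up the non-negative part. First I would write $e=\sum_{n\in\Z}\bar t^{\,n}l_n$ with $l_n\in L(H)_{W(k)}$ (and $l_n=0$ for $n\ll 0$), and observe that the Campbell--Hausdorff corrections $\tfrac12[\,\cdot\,,\,\cdot\,]+\cdots$ only involve higher commutators, so modulo $C_2(L(H))$ the operation $\sigma(x)\circ e\circ(-x)$ is just $e+\sigma(x)-x$; since $\sigma$ raises $\bar t$ to its $p$-th power, $\sigma(x)-x$ contributes $-\bar t^{\,n}(\text{linear term})+\bar t^{\,pn}(\ldots)$, which lets one cancel any term $\bar t^{\,n}l_n$ with $p\mid n$ at the cost of possibly introducing new terms in strictly higher (less negative, since $n<0$) powers and in $C_2(L(H))$. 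This is the standard descending induction of nilpotent Artin--Schreier theory.

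The key steps, in order, are: (1) induct on the nilpotence class, i.e. prove the statement modulo $C_j(L(H))$ for increasing $j$, using that $C_p(L(H))=0$ so the process terminates; (2) within each class level, induct on the $\bar t$-adic valuation — for the most negative exponent $n$ with $p\mid n$, choose $x=-\bar t^{\,n/p}\cdot(\text{appropriate lift of }l_n)$ to remove that term, noting the Frobenius-twist $\sigma(x)=\bar t^{\,n}(\ldots)$ produces exactly the needed cancellation up to higher-order junk; (3) handle the exponent $n=0$ separately, where one uses the chosen trace-$1$ element $\alpha_0\in W(k)$: the obstruction to removing the constant term lives in $L(H)_{W(k)}/(\sigma-1)L(H)_{W(k)}$, and by the normal-basis / trace argument (additive Hilbert 90 for $W(k)/\Z_p$ with $\sigma$) every class has a representative in $\alpha_0 L(H)$, so one can arrange $l_{00}\in\alpha_0 L(H)$; (4) dispose of the strictly positive powers $\bar t^{\,n}l_n$ with $n>0$ — here $\sigma$ strictly increases the valuation, so the operator $x\mapsto \sigma(x)-x$ on the positive-degree part is topologically nilpotent and hence invertible, letting one clear all positive terms by a convergent sum, after which only $a\in\Z^0(p)$ survive with $a>0$ treated by step (2); (5) check convergence in the $\varprojlim W_M(k)((\bar t))$ topology — the successive corrections increase either the nilpotence-class level or the $\bar t$-valuation, so the infinite composition converges.

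I expect the main obstacle to be bookkeeping the interaction between the Campbell--Hausdorff higher-order terms and the $\bar t$-adic grading: when one removes a bad term at class level $j$ and valuation $n$, the correction $\sigma(x)\circ e\circ(-x)$ feeds back lower-weight-looking contributions into $C_{j+1}(L(H))$ and into other valuations, and one must verify these always strictly improve some well-founded invariant (lexicographic in (class level, then negative valuation)) so the induction is genuinely finite in the class direction and convergent in the valuation direction. A secondary subtlety is the $n=0$ case: one must make sure the trace normalisation is compatible with the $L(H)$-module structure, i.e. that $\alpha_0 L(H)$ is the right target and that choosing a different Witt-vector lift of the residue-field class does not reintroduce lower-order problems — this is where the specific hypothesis on $\alpha_0$ (trace $1$ in $W(k)[1/p]/\Q_p$) is used, via the surjectivity of $\Tr:W(k)\to\Z_p$ and additive Hilbert~90.

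Once these steps are assembled, the resulting $e=\sum_{a\in\Z^0(p)}\bar t^{-a}l_{a0}$ with $l_{00}\in\alpha_0 L(H)$ and $l_{a0}\in L(H)_{W(k)}$ for $a\in\Z^+(p)$ is special by definition, and the accumulated $x$ lies in $L(H)_{O(K)}$, which proves the lemma.
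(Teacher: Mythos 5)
Your proposal is correct and follows essentially the same route as the paper: induction on the commutator filtration $C_s(L(H)_{O(K)})$, Campbell--Hausdorff linearization of $\sigma (x)\circ e\circ (-x)$ at each level, and then the additive splitting of $O(K)$ as $(\sigma -\id )O(K)\oplus \Z _p\alpha _0\oplus \sum _{a\in\Z ^+(p)}W(k)\bar t^{\,-a}$. The only difference is presentational: the paper invokes this splitting in a single display, whereas your steps (2)--(4) re-derive it exponent by exponent (with the minor caveat that elements of $O(K)$ may have infinite principal part, so the elimination must be run modulo $p^M$, as your convergence step implicitly does).
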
 

\begin{proof} 
of $L$, 
 Use induction on $s$ to prove lemma modulo the ideals of $s$-th commutators 
 $C_s(L(   H)_{O(   K)})$. 
 
 If $s=1$ there is nothing to prove. 
 
 Suppose lemma is proved modulo $C_s(L(H)_{O(   K)})$.
 
 Then  there is 
 $x\in L(   H)_{O(   K)}$ such that 
 $$\sigma (x)  \circ e  \circ (-x)
 = \sum _{a\in\Z^0(p)}\bar t^{-a}l_{a0}+l\, ,$$ 
 where $l\in C_s(L(   H)_{O(   K)})$. 
 Using that 
 \begin{equation} \label{E2.2} O(   K)=(\sigma -
 \id _{O(   K)})O(   K)\oplus (\Z _p\alpha _0) 
 \oplus \left (\sum _{a\in\Z ^+(p)}W(k)\bar t^{-a}\right )
 \end{equation} 
 we obtain the existence of $x_s\in C_s(L(   H)_{O(   K)})$ 
 such that 
 $$l=\sigma (x_s)-x_s+\sum _{a\in\Z ^0(p)}\bar t^{-a}l_a\,,$$ 
 where $l_0\in\alpha _0L$ and 
 all remaining $l_a\in L_{W(k)}$. Then we can take $x'=x-x_s$ to 
 obtain our statement modulo $C_{s+1}(L(   H)_{O(   K)})$.
\end{proof}

\begin{Lem} \label{L2.3} Suppose $e\in L_{O(   K)}$ is {\sc special} 
and $x\in L_{O(   K)}$. Then the element $\sigma (x)  \circ e
  \circ (-x)$ is {\sc special} iff 
$x\in L$ (or, equivalently, if $\sigma x=x$). 
\end{Lem}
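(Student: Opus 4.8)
The plan is to prove the two implications separately, settling the ``if'' direction first since it is used inside the proof of ``only if''. If $x\in L$ then $\sigma (x)=\sigma _{L(H)}(x)=x$, so $\sigma (x)  \circ e  \circ (-x)=x  \circ e  \circ (-x)$, and in the enveloping algebra $\exp (x)\exp (e)\exp (-x)=\exp \bigl(\exp (\ad x)(e)\bigr)$; hence in $G(L(H)_{O(K)})$ we have $x  \circ e  \circ (-x)=\exp (\ad x)(e)$. Since $x\in L$ the operator $\ad x$ is $O(K)$-linear, so if $e=\sum _{a\in\Z ^0(p)}\bar t^{-a}l_{a0}$ then $\exp (\ad x)(e)=\sum _{a\in\Z ^0(p)}\bar t^{-a}\exp (\ad x)(l_{a0})$; and as $\exp (\ad x)$ preserves $L$ it preserves both $L_{W(k)}=L\otimes _{\Z _p}W(k)$ and $\alpha _0L$, so $\exp (\ad x)(l_{00})\in\alpha _0L$ and $\exp (\ad x)(l_{a0})\in L_{W(k)}$ for $a\in\Z ^+(p)$, i.e.\ $\exp (\ad x)(e)$ is again {\sc special}. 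The equivalence $x\in L\Leftrightarrow \sigma x=x$ is immediate from $O(K)^{\sigma }=\Z _p$, which gives $L(H)_{O(K)}^{\sigma }=L$.

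For ``only if'', assume $\wt e:=\sigma (x)  \circ e  \circ (-x)$ is {\sc special}. I would prove by induction on $s$ that $x\in L$ modulo $C_s(L(H)_{O(K)})$; since $L^p=0$ we have $C_p(L(H)_{O(K)})=C_p(L(H))\otimes _{\Z _p}O(K)=0$, so this yields $x\in L$. The linear-algebra input is a refinement of \eqref{E2.2}: for every term $C_s(L(H))$ of the lower central series,
$$C_s(L(H))\otimes _{\Z _p}O(K)=\bigl(C_s(L(H))\otimes (\sigma -\id )O(K)\bigr)\oplus\bigl(C_s(L(H))\otimes S\bigr),$$
where $S=\Z _p\alpha _0\oplus\bigl(\textstyle\sum _{a\in\Z ^+(p)}W(k)\bar t^{-a}\bigr)$ is the complement occurring in \eqref{E2.2}. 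The crucial point is that $S$ is a \emph{free} $\Z _p$-module, so $\sigma -\id $ acting on $C_s(L(H))\otimes O(K)$ has kernel exactly $C_s(L(H))$ and image exactly $C_s(L(H))\otimes (\sigma -\id )O(K)$.

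For $s=1$: modulo $C_2(L(H)_{O(K)})$ the Campbell--Hausdorff law gives $\wt e\equiv e+(\sigma -\id )(x)$; since $\wt e$ and $e$ are {\sc special} and the {\sc special} elements form a $\Z _p$-submodule, $(\sigma -\id )(x)$ lies modulo $C_2$ in the $S$-part, while it obviously lies in the $(\sigma -\id )O(K)$-part, so $(\sigma -\id )(x)\equiv 0$ and $x\in L$ modulo $C_2(L(H)_{O(K)})$. For the step from $s$ to $s+1$: by the inductive hypothesis write $x=x_L  \circ y$ with $x_L\in L$ and $y\in C_s(L(H)_{O(K)})$; conjugating by $-x_L$ and using $\sigma (x_L)=x_L$ gives $(-x_L)  \circ \wt e  \circ x_L=\sigma (y)  \circ e  \circ (-y)$, which is {\sc special} by the ``if'' direction already proved. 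Because $y\in C_s$, every Campbell--Hausdorff correction term in $\sigma (y)  \circ e  \circ (-y)$ lies in $C_{s+1}(L(H)_{O(K)})$, so modulo $C_{s+1}$ this element is $\equiv e+(\sigma -\id )(y)$; repeating the argument of the case $s=1$ with $C_s(L(H))$ in place of $L(H)$ forces $(\sigma -\id )(y)\equiv 0$ modulo $C_{s+1}$, hence $y\in C_s(L(H))$ modulo $C_{s+1}$ and therefore $x=x_L  \circ y\in L$ modulo $C_{s+1}(L(H)_{O(K)})$.

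The main obstacle is the bookkeeping of the inductive step: checking that $x$ can indeed be written as $x_L  \circ y$ with $y$ in the $s$-th commutator ideal (take $y=(-x_L)  \circ x$ and note its Campbell--Hausdorff expansion lies in $C_s$), that conjugation by an element of $L$ preserves {\sc special}ity (which is exactly the ``if'' direction, so the two halves of the proof are genuinely interlocked), and that every iterated bracket against $y\in C_s$ lands in $C_{s+1}$ so that all Campbell--Hausdorff tails vanish modulo $C_{s+1}$. Once this is in place, each step of the induction closes by the splitting coming from the displayed refinement of \eqref{E2.2} together with $O(K)^{\sigma }=\Z _p$.
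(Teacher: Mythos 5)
Your argument is correct and is essentially the paper's own proof: the \emph{if} part via the conjugation formula $\exp(x)\exp(e)\exp(-x)=\exp\bigl(\sum_{n\geqslant 0}\tfrac{1}{n!}\op{ad}^n(x)(e)\bigr)$, and the \emph{only if} part by induction along the commutator ideals $C_s(L(H)_{O(K)})$, using the \emph{if} part to reduce to $x\in C_s$, linearizing the Campbell--Hausdorff product modulo $C_{s+1}$, and invoking the splitting \eqref{E2.2} to force $\sigma (x)\equiv x\,\op{mod}\,C_{s+1}$. The extra bookkeeping you supply (the factorization $x=x_L\circ y$, tensoring \eqref{E2.2} with $C_s/C_{s+1}$, and $O(K)^{\sigma }=\Z _p$) is exactly what the paper leaves implicit.
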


\begin{proof} 
 Use relation cf.\,\cite{BF},  
 $$\op{Ad}(\exp (X))\exp (Y)=\exp \left (\sum _{n\geqslant 0}
 \frac{1}{n!}\op{ad}^n(X)(Y)\right )\,\op{mod} (\op{deg}\,p),$$ 
 where $\op{Ad}(U)(V)=UVU^{-1}$ and $\op{ad}(U)(V)=[U,V]$. 
 Indeed, if $X=x\in L(H)$ and $Y=e$ then 
 $\sum _{n\geqslant 0}\op{ad}^n(x)(e)/n!$ is also special. 
 
 When proving the inverse statement we can use induction 
 modulo the ideals $C_s(L(   H))_{O(   K)}$, $s\geqslant 1$,  as follows. 
 
 Assume the lemma is proved modulo $C_s(L(   H)_{O(   K)})$. Then using  
 the {\sc if} part 
 we can assume that $x\in C_s(L(   H)_{O(   K)})$. Therefore, 
 $e+\sigma (x)-x$ is special modulo $C_{s+1}(L(   H))_{O(   K)}$, i.e. 
 $$\sigma (x)-x\in \alpha _0C_s(L)+\sum _{a\in\Z ^+(p)}
 t^{-a}C_s(L)_{W(k)}$$ 
 modulo $C_{s+1}(L(   H)_{O(   K)})$.  
 By \eqref{E2.2} this 
 implies the congruence 
 $$\sigma (x)\equiv x\,\op{mod}C_{s+1}(L(   H))_{O(   K)}\,,$$ 
 i.e. $x\in C_s(L(   H))\,\op{mod}\,C_{s+1}(L(   H)_{O(   K)})$.
 
 The lemma is proved. 
\end{proof}

\subsection{Construction of the $\phi $-module $M(   H)$} \label{S2.3}

Note that  $\pi _{H}=\exp (l_{   H})$, and therefore for all 
$\tau\in\Gamma _{   K}$, it holds 
$$\pi _{   H}(\tau )=
\exp (-f)  \cdot \exp (\id _{L(H)}\otimes \tau )f\,,$$ 
where   
$f\in L(   H)_{O_{sep}}\subset 
\op{End}_{O_{sep}}(   H_{O_{sep}})$, $\sigma _{L(H)}(f)=e\circ f$  and  
$\exp f\in\op{Aut}_{O_{sep}}(   H_{O_{sep}})$. 
\medskip

Let $\op{MF}^{et}_{   K}$ be the category of etale 
$\phi $-modules over $O(   K)$.  
Recall that 
its objects are $O(   K)$-modules of finite type $M$ 
together with a 
$\sigma $-linear morphism $\phi :M\To M$ such that 
its $O(   K)$-linear extension 
$\phi _{O(   K)}:M\otimes _{O(   K),\sigma }O(   K)\To  M$ 
is isomorphism. 
The correspondence $   H\mapsto M(   H):
=(   H\otimes _{\Z _p} O_{sep})^{\Gamma _{   K}}$, 
where $\phi :M(   H)\To M(   H)$ comes from the action of   
$\sigma $ on $O_{sep}$, determines the  
equivalence of the categories $\op{M\Gamma }_{   K}$ 
and $\op{MF}^{et}_{   K}$. 
\medskip 

Consider the $\Z _p$-linear embedding $\c F:   H\To H_{O_{sep}}$ 
from Sec.\,\ref{S2.1}. 
 Let 
$M(H)=\c F(H)_{O(K)}$. 
By extension of scalars 
we obtain natural 
isomorphisms (use that $O(K)$ and $O_{sep}$ are flat $\Z _p$-modules): 
$$ \c F\otimes\id _{O_{sep}}:   H_{O_{sep}}
\simeq M(   H)_{O_{sep}}$$  
$$\c F\otimes
\id _{O(K)}:H_{O(K)}\simeq M(H)\,,$$
which will be denoted for simplicity just by $\c F$. 

Note that by Prop.\,\ref{P2.1}, 
$M(   H)= (   H_{O_{sep}})^{\Gamma _{   K}}$. 
\medskip

The $O(K)$-module 
$M(H)$ is provided with the $\sigma $-linear morphism 
$\phi :M(   H)\To M(   H)$ 
uniquely determined for all $h\in   H$ via  
$$\phi (\c F(h))=\exp (-\sigma _{L(H)}f)(h)
=(\exp (-f)  \cdot \exp (-e))(h)=\c F(\exp (-e)(h))\,.$$ 

Consider the $O(   K)$-linear operator 
$$A:=\exp (-e)\in 
\exp (L(   H)_{O(   K)})\subset \op{Aut}_{O(   K)}   H_{O(   K)}\,.$$
Then $\c A:=A\cdot \sigma _H$ will be a unique 
$\sigma $-linear operator on $L(H)_{O(K)}$ such that 
$\c A|_{H}=A|_{H}$. 
Clearly, for any $u\in H_{O(K)}$, 
$$\phi (\c F(u))=\c F(\c A(u))\,,$$
and    
$M(   H)$ is etale 
$\phi $-module associated with the 
$\Z _p[\Gamma _{   K}]$-module $   H$. 

For example, suppose $pH=0$ and 
$\{h_i\ |\ 1\leqslant i\leqslant N\}$ is $\F _p$-basis of $H$. 
Then $\{\c F(h_i)\ |\ 1\leqslant i\leqslant N\}$ is a $   K$-basis 
for $M(H)$. If $A(h_i)=\exp (-e)(h_i)=\sum _ja_{ij}h_j$ with 
all $a_{ij}\in   K$ then 
$\phi (\c F(h_i))=\sum _ja_{ij}\c F(h_j)$, 
and $((a_{ij}))$ appears as the corresponding 
``Frobenius matrix''. 

It can be easily seen also that 
if $\{h_i\ |\ 1\leqslant i\leqslant N\}$ is a minimal system of 
$\Z _p$-generators in $H$ then 
$\{\c F(h_i)\ |\ 1\leqslant i\leqslant N\}$ is a minimal system of 
$O(K)$-generators in $M(H)$.  
\medskip

\subsection{The connection $\nabla $ on $M(   H)$} \label{S2.4}

The $(\phi , O(   K))$-module $M(   H)$ 
can be provided with a connection 
$\nabla :M(   H)\To M(   H)\otimes _{O(   K)}\Omega ^1_{O(   K)}$, 
\cite{Fo}.
This is an additive map uniquely determined by the properties:
\medskip 

a) for any $o\in O(   K)$ and $m\in    M(   H)$, 
$\nabla (mo)=\nabla (m)o+m\otimes d(o)$;
\medskip 

b) $\nabla  \cdot \phi =(\phi\otimes\phi )  \cdot\nabla $. 
\medskip 

By a), $\nabla $ is uniquely determined by its restriction 
to $\c F(H)$ (use that $M(H)=\c F(H)_{O(K)}$). 
Let $\wt{B}$ be 
a unique $O(   K)$-linear operator on $M(   H)$ such that  
for any $m\in\c F(H)$, 
$\nabla (m)=\wt{B}(m)d\bar t/\bar t$. Consider the $O(K)$-linear operator 
$B\in\op{End}H_{O(K)}$ such that for all $u\in H_{O(K)}$,  
$\wt{B}(\c F(u))=\c F(B(u))$. Obviously, 
$\wt B$ and $B$ can be recovered one from another.

Note that for any $\Z_p$-module $\c C$, the elements  $c\in \c C_{O(K)}$ 
appear uniquely in the form  
$c=\sum _n c_n\otimes \bar t^n$ with all $c_n\in \c C_{W(k)}$.  
Therefore, the map 
$\id _{\c C}\otimes \partial _{\bar t}: \c C _{O(K)}
\To \c C_{O(K)}$ 
such that 
$c\mapsto \sum _n c_n\otimes n\bar t^n$  
is well-defined. If $\c C\subset \c C_1$ is an embedding of $\Z _p$-modules then we have 
$(\id _{\c C_1}\otimes\partial _{\bar t})|_{\c C_{O(K)}}=\id _{\c C}\otimes\partial _{\bar t}$.

With the above notation:
\medskip 

1) for all $m\in M(H)$, $\nabla (m)=(\wt{B}
+\id _{\c F(H)}\otimes \partial _{\bar t})
(m)d\bar t/\bar t\,.$

2) for all $u\in H_{O(K)}$ and $X\in \op{End}H_{O(K)}$, 
$$(\id _H\otimes \partial _{\bar t})(X (u))=
(\id _{\op{End} H}\otimes \partial _{\bar t})(X)(u) +
X ((\id _{H}\otimes\partial _{\bar t})u)\, .$$

\begin{Prop} \label{P2.4} 
Let $C=-(\id _{\op{End}H}\otimes\partial _{\bar t})A  \cdot A^{-1}$ 
and let for 
any $n\geqslant 1$, 
$D^{(n)}=A  \cdot \sigma _{\op{End}H}(A)  \cdot\ldots  
\cdot \sigma _{\op{End}H}^{n-1}(A)$. 
Then  
$$B=\sum _{n\geqslant 0}p^n
\op{Ad}(D^{(n)})\sigma _{\op{End}H}^n(C)\,.$$ 
\end{Prop}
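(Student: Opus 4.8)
The plan is to derive the formula for $B$ directly from the defining property (b) of the connection, $\nabla\cdot\phi=(\phi\otimes\phi)\cdot\nabla$, rewritten in terms of the operators $A$, $B$, $C$ on $H_{O(K)}$ via the identification $\c F$. First I would translate the two sides of (b). On $\c F(H)$ we have $\nabla(\c F(u))=\c F\bigl((B+(\id_H\otimes\partial_{\bar t}))(u)\bigr)\,d\bar t/\bar t$ for $u\in H$ (extending the notation of Sec.\,\ref{S2.4} so that $\partial_{\bar t}$ acts through the $\bar t$-expansion), while $\phi(\c F(u))=\c F(\c A(u))$ with $\c A=A\cdot\sigma_H$. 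Applying $\nabla$ after $\phi$ and using $d(\sigma o)=\sigma(o)^{p-1}\,d(\sigma o)$... more precisely $d\sigma=p\,\sigma(\cdot)\,\partial_{\bar t}(\cdot)\,d\bar t/\bar t$ on $O(K)$ (since $\sigma(\bar t)=\bar t^{\,p}$, $d\sigma(\bar t)/\sigma(\bar t)=p\,d\bar t/\bar t$), I obtain that the chain rule produces a factor $p$ every time $\phi$ is pushed past $\partial_{\bar t}$. Carrying this out carefully on a general element $u=\sum_\alpha h_\alpha\otimes o_\alpha$ and comparing coefficients of $d\bar t/\bar t$ should give a functional equation of the shape
$$
B+(\id_H\otimes\partial_{\bar t})=\op{Ad}(A)\Bigl(p\,\sigma_{L(H)}\bigl(B+(\id_H\otimes\partial_{\bar t})\bigr)\Bigr)+\bigl(-(\id_{L(H)}\otimes\partial_{\bar t})A\bigr)\cdot A^{-1},
$$
i.e. after cancelling the $\partial_{\bar t}$-terms (using property a) of the notation in 2b) and 3)), $B=C+p\,\op{Ad}(A)\,\sigma_{L(H)}(B)$, where $C=-(\id_{L(H)}\otimes\partial_{\bar t})A\cdot A^{-1}$ is exactly the operator in the statement.

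Second, I would solve this recursion for $B$ by iteration. Writing $B=C+p\,\op{Ad}(A)\sigma_{L(H)}(B)$ and substituting repeatedly yields
$$
B=\sum_{n\geqslant 0}p^n\,\op{Ad}(A)\,\op{Ad}(\sigma_{L(H)}A)\cdots\op{Ad}(\sigma_{L(H)}^{\,n-1}A)\,\sigma_{L(H)}^n(C),
$$
and since $\op{Ad}$ is multiplicative, $\op{Ad}(A)\op{Ad}(\sigma_{L(H)}A)\cdots\op{Ad}(\sigma_{L(H)}^{\,n-1}A)=\op{Ad}(D^{(n)})$ with $D^{(n)}=A\cdot\sigma_{L(H)}(A)\cdots\sigma_{L(H)}^{\,n-1}(A)$ as defined. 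This is precisely the claimed identity. Convergence of the series is not an issue: the factors $p^n$ force convergence in the $p$-adic topology (and if $pH=0$ the sum is actually finite since $A\equiv\id$, $C\equiv 0$ modulo... well, more honestly, modulo the augmentation-type filtration the $p$-adic factor still controls it once one checks $D^{(n)}$ is a unit), so the right-hand side is a well-defined element of $\op{End}_{O(K)}H_{O(K)}$; that it lands in $L(H)_{O(K)}$ (so $\wt B$ is well-defined via $B$) follows because $C\in L(H)_{O(K)}$ — $C$ is a logarithmic derivative of $A=\exp(-e)$, hence a commutator-series in $L(H)_{O(K)}$ — and $\op{Ad}$ and $\sigma_{L(H)}$ preserve $L(H)_{O(K)}$.

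The main obstacle I anticipate is the bookkeeping in the first step: property (b) of $\nabla$ is a statement about the $\sigma$-linear map $\phi$ on $M(H)$, and one must be careful about how $\phi\otimes\phi$ acts on $M(H)\otimes_{O(K)}\Omega^1_{O(K)}$, in particular the $p$ coming from $d\sigma/\sigma=p\,d\bar t/\bar t$ versus the $\sigma$-twist on the $\c F(H)_{W(k)}$-coefficients, and about commuting $(\id_H\otimes\partial_{\bar t})$ past the operator $A$ (this is where the Leibniz rule in Notation 2b) and 3) of Sec.\,\ref{S2.4} is used, and it is what makes the $\partial_{\bar t}$-terms cancel cleanly and leaves exactly $C$). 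Once the functional equation $B=C+p\,\op{Ad}(A)\sigma_{L(H)}(B)$ is established, the rest is the formal geometric-series manipulation above. I would also remark that uniqueness of $B$ (hence of $\wt B$, hence of $\nabla$) is automatic from the recursion, matching the claim in Sec.\,\ref{S2.4} that $\nabla$ is uniquely determined by (a) and (b).
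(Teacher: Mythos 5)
Your proposal is correct and follows essentially the same route as the paper: translating $\nabla\cdot\phi=(\phi\otimes\phi)\cdot\nabla$ through $\c F$ (with the factor $p$ from $\phi(d\bar t/\bar t)=p\,d\bar t/\bar t$ and the Leibniz rule cancelling the $\partial_{\bar t}$-terms) to get $B=C+p\,\op{Ad}(A)\sigma_{L(H)}(B)$, which is exactly the paper's identity $(\id-p\,\op{Ad}A\cdot\sigma_{L(H)})B=C$, and then inverting by the geometric series $\sum_{n\geqslant 0}p^n\op{Ad}(D^{(n)})\sigma_{L(H)}^n$. Only your intermediate ``functional equation of the shape'' display is loosely stated (conjugating the derivation by $A$ and $\sigma_H$ already produces the $C$-term), but the recursion you extract from it and the iteration are precisely the paper's argument.
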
 

\begin{remark} \label{r8} 
In Sect.\ref{S4} we will prove that  
$C,B\in L(H)_{O(K)}\subset\op{End}H_{O(K)}$. 
In particular, $\sigma _{\op{End}H}C=\sigma _{L(H)}C$ and the correspondence 
$u\mapsto (B+\op{id}_{L(H)}\otimes\partial _{\bar t})(u)d\bar t/\bar t$ 
gives a connection on $L(H)_{O(K)}$. 
\end{remark}

\begin{proof} Indeed, for any $u\in H_{O(K)}$, 
it holds 
$$(\nabla  \cdot\phi )(\c F(u))=(\nabla\cdot \c F\cdot \c A)u=((\wt B+\id _{\c F(H)}\otimes \partial _{\bar t})
\cdot \c F\cdot \c A)(u)d\bar t/\bar t$$

$$=(\c F\cdot (B+\id _H\otimes 
\partial _{\bar t})\cdot \c A)(u)d\bar t/\bar t\, .$$

On the other hand, 
$(\phi\otimes\phi )(\nabla (\c F(u)))=$
$$\phi (\c F(B+\id _H\otimes\partial _{\bar t})u)\phi (d\bar t/\bar t)=
p(\c F\cdot \c A\cdot (B+\id _H\otimes\partial _{\bar t}))(u)d\bar t/\bar t \,.$$
  
Equivalently, we have the following identity on $H_{O(K)}$,  
 $$(B\cdot A+(\id _H\otimes\partial _{\bar t})\cdot A)
 \cdot \sigma _H=
 pA  \cdot \sigma _H\cdot (B+\id _H
 \otimes \partial _{\bar t})
 $$
 Rewrite this equality as follows 
 $$(B\cdot A-pA\cdot \sigma _{\op{End}H}(B))\cdot \sigma _H=
 (-(\id _H\otimes\partial _{\bar t})\cdot A+
 A\cdot\sigma _H\cdot (\id _H\otimes p\partial _{\bar t})\cdot \sigma _H^{-1})\cdot\sigma _H\,.$$
 
 Notice that that on $H_{O(K)}$ we have 
 $\sigma _H\cdot (\id _H\otimes p\partial _{\bar t})\cdot \sigma _H^{-1}=
 \id _H\otimes \partial _{\bar t}$. As a result, the right hand side 
 equals $-(\id _{\op{End}H}\otimes\partial _{\bar t})(A)\cdot\sigma _H$.
 
 From $\sigma _H|_H=\id _H$ it follows that by restriction on $H$ we have 
 
 $$B\cdot A-pA\cdot \sigma _{\op{End}H}(B)=-(\id _{\op{End}H}\otimes\partial _{\bar t})A.$$
 
 By $O(K)$-linearity this identity holds on the whole $H_{O(K)}$.

 As a result, our identity appears in the form 
 $$(\id_H-p\op{Ad}(A)\cdot \sigma _{\op{End}H})B=
 -(\id _H\otimes\partial _{\bar t})(A)  \cdot A^{-1}\,.$$
It remains to recover $B$ using 
that
$$(\id _{H}-p\,\op{Ad}A  \cdot \sigma _{\op{End}H})^{-1}=
\sum _{n\geqslant 0}p^n\op{Ad} (D^{(n)})  \cdot \sigma _{\op{End}H}^n\,.$$ 
\end{proof}

\section{Ramification filtration 
modulo $p$-th commutators} \label{S3}

Recall that $K=k((t))\subset  K_0^{tr}$,  
$\pi _{H}(\Gamma _{K})=\exp (L(H))=I(H)\subset\op{Aut}_{\Z _p}(H)$.  
Note also that $O(K)=W(k)((\bar t))$, where 
${\bar t}^{\,e_0}=\bar t_0$ and 
$e_0$ is the ramification index of $K/K_0$.

\subsection{Lie algebra $\c L$ and identification $\eta _{<p}$} 
\label{S3.1}
 
Let  
$   K_{<p}$ be the maximal $p$-extension 
of $   K$ in $   K_0^{sep}$ 
with the Galois group of nilpotent class $<p$. 
Then $\c G _{<p}:=\op{Gal}(   K_{<p}/   K)=
\underset{M}\varprojlim 
\,\Gamma _{   K}/\Gamma _{   K}^{p^M}C_p(\Gamma _{   K})$.

Let  
$\wt{\c L}_{W(k)}$ be a profinite free Lie $W(k)$-algebra with the set 
of topological generators 
$\{D_{0}\}\cup\{D_{an}\ |\ a\in\Z ^+(p), n\in\Z/N_0\}$. 
Let $\c L_{W(k)}=\wt{  \c L}_{W(k)}/C_p(\wt{  \c L}_{W(k)})$, 
where $C_p(\wt{  \c L}_{W(k)})$ is 
the ideal of $p$-th commutators. 
Define the $\sigma $-linear action on $  \c L_{W(k)}$ 
via $D_{an}\mapsto D_{a,n+1}$ and $D_0\mapsto D_0$, denote this 
action by the same symbol $\sigma $,  
and set $  \c L= \c L_{W(k)}|_{\sigma =\id }$. 

Fix $\alpha _0=\alpha _0(k)\in W(k)$ such that 
the trace of $\alpha _0$ in the field extension 
$W(k)[1/p]/\Q _p$ equals 1. 
For any $n\in\Z /N_0$, set 
$D_{0n}=(\sigma ^n\alpha _0)D_0$.

We are going to apply the profinite version of 
the covariant nilpotent Artin-Schreier theory to the 
Lie algebra $\c L$ and the special element    
$e_{<p}=\sum \limits _{a\in\Z ^0(p)} 
\bar t^{-a}D_{a0}\in  \c L\hat\otimes 
O(K)$.
In other words, if we fix   
$$   f_{<p}\in \{f\in \c L\hat
\otimes O_{sep}\ |\ \sigma _{\c L}(f)=
e  \circ f\}\ne\emptyset \, .$$
then the map 
$\eta _{<p}:=\pi _{f_{<p}}(e_{<p})$ 
given by the correspondence 
$\tau\mapsto (-f_{<p})  \circ (\id _{\c L}\otimes\tau )f_{<p}$ 
induces the group isomorphism 
$\bar\eta _{<p}:\Gamma _{<p}\simeq G(\c L)$. 

The following property is an easy consequence of the above construction.  

\begin{Prop} \label{P3.1} Suppose 
$e\in L(   H)_{O(   K)}$ is special and given with notation from  
the definition from Sect.\,\ref{S2.2}. 
Then the map $\log\pi _{   H}:\c G_{<p}\To G(L(   H))$ is given 
via the correspondences $D_{a0}\mapsto l_{a0}$ 
(and $D_{an}\mapsto \sigma _{L(H)}^n(l_{a0})$) for all $a\in\Z ^0(p)$.
\end{Prop}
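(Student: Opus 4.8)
The plan is to compare the two presentations of the Galois action that are now available: on one hand the abstract group isomorphism $\bar\eta_{<p}\colon \c G_{<p}\simeq G(\c L)$ built in Section 3.1 from the special element $e_{<p}=\sum_{a\in\Z^0(p)}\bar t^{-a}D_{a0}$ and a chosen Frobenius solution $f_{<p}$; on the other hand the homomorphism $\log\pi_H=l_H\colon \Gamma_K\to G(L(H))$ of Section 2.1, which factors through $\c G_{<p}$ because $L(H)^p=0$. The key point is functoriality of the covariant nilpotent Artin--Schreier construction: since $e\in L(H)_{O(K)}$ is \emph{special}, i.e. $e=\sum_{a\in\Z^0(p)}\bar t^{-a}l_{a0}$ with $l_{00}\in\alpha_0 L(H)$ and $l_{a0}\in L(H)_{W(k)}$, it is the image of $e_{<p}$ under the (unique, $\sigma$-equivariant, $O(K)$-linear) Lie algebra morphism $\c L\hat\otimes O(K)\to L(H)_{O(K)}$ that sends $D_{a0}\mapsto l_{a0}$ and hence $D_{an}\mapsto\sigma_{LH}^n(l_{a0})$.

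First I would record that this assignment $D_{a0}\mapsto l_{a0}$ does extend to a morphism of topological Lie $W(k)$-algebras $\wt{\c L}_{W(k)}\to L(H)_{W(k)}$: the target is a finitely generated nilpotent Lie algebra of class $<p$ by the (Lie)-condition, so it kills $C_p(\wt{\c L}_{W(k)})$ and descends to $\c L_{W(k)}\to L(H)_{W(k)}$; it commutes with $\sigma$ by construction of the $\sigma$-action $D_{an}\mapsto D_{a,n+1}$ together with the normalization $D_{0n}=(\sigma^n\alpha_0)D_0$ matching $l_{00}\in\alpha_0 L(H)$; hence it restricts to $\gamma\colon\c L\to L(H)$ and $O(K)$-linearly extends to $\c L\hat\otimes O(K)\to L(H)_{O(K)}$ with $\gamma(e_{<p})=e$. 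Second, I would push the Frobenius equation forward: applying $\gamma\hat\otimes\id_{O_{sep}}$ to $\sigma_{\c L}(f_{<p})=e_{<p}\circ f_{<p}$ yields an element $f:=\gamma(f_{<p})\in L(H)_{O_{sep}}$ satisfying $\sigma_{LH}(f)=e\circ f$, so by the remark at the end of Section 2.1 on the indeterminacy of $(e,f)$ this $f$ is an admissible choice for the given $e$ (any two such $f$ differing by $x\circ-$ with $\sigma x=x$, which only conjugates $l_H$ by an inner automorphism and does not affect the statement). Third, from the defining formulas $\eta_{<p}(\tau)=(-f_{<p})\circ(\id_{\c L}\otimes\tau)f_{<p}$ and $l_H(\tau)=(-f)\circ(\id_{L(H)}\otimes\tau)f$, and the fact that $\gamma$ intertwines $\id\otimes\tau$ on both sides, I get $l_H=\gamma_{G}\circ\eta_{<p}$ as maps $\Gamma_K\to G(L(H))$, where $\gamma_G\colon G(\c L)\to G(L(H))$ is the group homomorphism induced by $\gamma$ (here I use that $\gamma$ is a Lie algebra map, hence compatible with the Campbell--Hausdorff law $\circ$, as in Section 1.3). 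Passing to $\c G_{<p}$ via $\bar\eta_{<p}$ gives exactly $\log\pi_H=\gamma_G\circ\bar\eta_{<p}$, i.e. $\log\pi_H$ on $\c G_{<p}\simeq G(\c L)$ is the correspondence $D_{a0}\mapsto l_{a0}$, $D_{an}\mapsto\sigma_{LH}^n(l_{a0})$, which is the claim.

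The main obstacle, and the only thing requiring genuine care rather than bookkeeping, is the \emph{uniqueness/consistency} step: I must check that the specific $f$ obtained as $\gamma(f_{<p})$ is compatible with the $f$ used to define $\c F$ and $\pi_H$ in Section 2.1 up to the allowed ambiguity $f\rightsquigarrow x\circ f$ with $x\in L(H)_{O(K)}$, $\sigma x=x$ — otherwise the two homomorphisms would only agree up to an inner automorphism of $G(L(H))$ rather than on the nose. This is where Lemma 2.3 is used: given that $e$ is special, the stabilizer of the special form consists precisely of $x\in L(H)$ (equivalently $\sigma x=x$), so the residual freedom in $f$ is exactly the inner freedom that the statement is insensitive to, and one concludes that $l_H$ and $\gamma_G\circ\eta_{<p}$ literally coincide. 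I would also remark that well-definedness of the pushforward $\gamma(f_{<p})$ (convergence in $L(H)_{O_{sep}}$) is immediate because $L(H)$ is nilpotent of class $<p$, so $\gamma$ factors through the finite-level quotients on which everything is a finite sum.
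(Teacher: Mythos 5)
Your overall strategy is the one the paper intends: Proposition 3.1 is stated there without proof (``an easy consequence of the above construction''), and the intended argument is precisely your functoriality step --- since $e$ is special, it is the image of $e_{<p}$ under the $\sigma$-equivariant Lie morphism $\gamma:\c L\to L(H)$ with $D_{an}\mapsto \sigma ^n(l_{a0})$ (killing $C_p$ because $L(H)^p=0$), and $\gamma$ transports the whole Artin--Schreier datum $(e_{<p},f_{<p})$ to a datum for $e$. That part of your write-up is sound.

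The consistency step, however, is handled incorrectly, and it is exactly the point you yourself single out as the crux. For a \emph{fixed} special $e$, two solutions of $\sigma (f)=e\circ f$ differ by \emph{right} translation: if $f'$ is a second solution, then $c:=(-f)\circ f'$ satisfies $\sigma (c)=c$, hence $c\in L(H)$ and $f'=f\circ c$; it is the \emph{left} translation $f\mapsto x\circ f$ with $\sigma (x)=x$ that leaves $l_H$ unchanged but replaces $e$ by $x\circ e\circ (-x)$ --- you have these two actions interchanged. Consequently $\gamma (f_{<p})=f\circ c$ for some $c\in L(H)$, and the homomorphism it defines is $\tau \mapsto (-c)\circ l_H(\tau )\circ c$; that is, $\gamma _G\circ \eta _{<p}$ agrees with $\log \pi _H$ only up to conjugation by $\exp (c)\in \exp (L(H))$. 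Lemma 2.3, which describes the modifications of $e$ preserving specialness and not the fibre of solutions $f$ over a fixed $e$, cannot upgrade this to the literal coincidence you claim. The gap is easily repaired: either (i) note that $\gamma $ is surjective (its composite with $\eta _{<p}$ is conjugate to the surjection $l_H$), so after replacing $f_{<p}$ by $f_{<p}\circ c'$ with $\gamma (c')=c$ one may assume $\gamma (f_{<p})=f$, i.e.\ make the choices compatibly --- this is what the paper implicitly does in Section 4 when it writes $f_{<p}\mapsto f$; or (ii) read the statement modulo inner automorphisms, observing that $\bar\eta _{<p}$ itself depends on the choice of $f_{<p}$ up to conjugation and that every later use (the ideals $L(H)^{(v)}$, the conjugacy class of $\Omega [N]$, cf.\ the remark in Section 4) is invariant under $\op{Ad}\,\exp (L(H))$. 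With either repair your argument is complete.
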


\subsection{The ramification ideals $\c L^{(v)}$} \label{S3.2}

For $v\geqslant 0$, denote by $\c G ^{(v)}_{<p}$ the image 
of $\Gamma _{   K}^{(v)}$ in $\c G _{<p}$. Then 
$\bar{\eta }_{<p}(\c G ^{(v)}_{<p})=G(\c L^{(v)})$, where 
$\c L^{(v)}$ is an ideal in $\c L$. 
The images $\c L^{(v)}(M)$ of the ideals $\c L^{(v)}$ 
in the quotients 
$\c L/p^M\c L$ for all $M\in\N $
were explicitly described in \cite{Ab3}. By going to the 
projective limit on $M$ this description can be presented as follows.  
\medskip

\begin{definition} 
 Let $\bar n=(n_1,\dots ,n_s)$ with $s\geqslant 1$. 
 Suppose there is a partition  
 $0=i_0<i_1<\dots <i_r=s$ such that 
 for $i_j<u\leqslant i_{j+1}$, it holds  
 $n_u=m_{j+1}$ and $m_1>m_2>\dots >m_r$. Then  set  
 $$\eta (\bar n)=\frac{1}{(i_1-i_0)!\dots (i_r-i_{r-1})!} $$ 
 If such a partition does not exist we set $\eta (\bar n)=0$. 
\end{definition}

For $s\in\N $, $\bar a=(a_1,\dots ,a_s)\in\Z ^0(p)^s$ and 
$\bar n=(n_1,\dots ,n_s)\in\Z ^s$,   
set 
$$[D_{\bar a,\bar n}]=[\ldots [D_{a_1n_1},D_{a_2n_2}],
\dots ,D_{a_sn_s}]\,.$$ 

For $\alpha\geqslant 0$ and $N\in\Z _{\geqslant 0} $, introduce 
$\c F^0_{\alpha ,-N}\in{   L}_{W(k)}$ such that 
$$\c F^0_{\alpha ,-N}=\sum _{\substack {1\leqslant s<p\\
\gamma (\bar a, \bar n)=\alpha }}
a_1\eta (\bar n)p^{n_1}[D_{\bar a,\bar n}]\, .$$

Here $n_1\geqslant 0$, all $n_i\geqslant -N$ and 
$\gamma (\bar a,\bar n)=
a_1p^{n_1}+a_2p^{n_2}+\dots +a_sp^{n_s}$\,.
\medskip 

Note that the non-zero terms in the above expression for 
$\c F^0_{\alpha , -N}$ can appear only if 
$n_1\geqslant n_2\geqslant\ldots \geqslant n_s$ and 
$\alpha $ has at least 
one presentation in the form $\gamma (\bar a,\bar n)$. 
\medskip 

Denote by $\c I^{(v)}[N]$ the minimal closed ideal in 
$\c L$ such that its extension of scalars $\c I^{(v)}[N]_{W(k)}$ 
contain all $\c F^0_{\alpha ,-N}$ with $\alpha\geqslant v$. 

Our result from \cite{Ab3} about explicit generators of 
the ideal ${\c L}^{(v)}$ can be stated 
in the following form. 

\begin{Thm} \label{T3.2}
For any $v>0$ and $M\in\N $, there is  
$\wt{N}(v,M)\in\N $ such that if   
$N\geqslant \wt{N}(v,M)$, then the images of the ideals $\c L^{(v)}$ and $\c I^{(v)}[N]$ 
in $\c L/p^M$ coincide. 
\end{Thm}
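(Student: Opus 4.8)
The plan is to derive Theorem~\ref{T3.2} from the description of the ideals $\mathcal{L}^{(v)}(M)\subset\mathcal{L}/p^M\mathcal{L}$ obtained in \cite{Ab3}, and then to pass to the projective limit over $M$. Recall that the isomorphism $\bar\eta_{<p}:\mathcal{G}_{<p}\simeq G(\mathcal{L})$ arises by fixing $f_{<p}$ with $\sigma_{\mathcal{L}}(f_{<p})=e_{<p}\circ f_{<p}$ and setting $\eta_{<p}(\tau)=(-f_{<p})\circ(\mathrm{id}_{\mathcal{L}}\otimes\tau)f_{<p}$, and that it carries $\mathcal{G}_{<p}^{(v)}$ onto $G(\mathcal{L}^{(v)})$ for a uniquely determined closed ideal $\mathcal{L}^{(v)}$. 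Thus it suffices, for each $M$, to compute the image of $\mathcal{L}^{(v)}$ in $\mathcal{L}/p^M$ and to match the answer with $\mathcal{I}^{(v)}[N]$ for $N$ large.

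For this I would follow the method of \cite{Ab1,Ab3}: one applies a sequence of admissible substitutions $e_{<p}\mapsto\sigma(x)\circ e_{<p}\circ(-x)$, $x\in\mathcal{L}\hat\otimes O(K^{rad})$, which leave the homomorphism $\mathcal{G}_{<p}\to G(\mathcal{L})$ unchanged (cf. the analogous freedom for $L(H)$ in Sec.~\ref{S2.1}) but, after applying $\sigma^{-N}$ and passing to $O(K^{rad})=\bigcup_N O(K^{ur})[\bar t_0^{1/p^N}]$, bring $e_{<p}$ into a form adapted to the $v$-th ramification break. The coefficients of the surviving negative powers $\bar t_0^{-\alpha}$ turn out to be exactly the elements $\mathcal{F}^0_{\alpha,-N}$: the weights $a_1\eta(\bar n)p^{n_1}$ and the condition $\gamma(\bar a,\bar n)=\alpha$ record which iterated commutators $[D_{\bar a,\bar n}]$ (of length $s<p$, hence genuinely in $\mathcal{L}$) contribute at exponent $\alpha$ after $N$ Frobenius iterations, the range $n_1\geqslant 0$, $n_i\geqslant -N$ being encoded in the subscript. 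A Herbrand-type computation then identifies the image of $\mathcal{L}^{(v)}$ in $\mathcal{L}/p^M$ with the ideal generated by $\{\mathcal{F}^0_{\alpha,-N}\mid\alpha\geqslant v\}$, i.e. with the image of $\mathcal{I}^{(v)}[N]$, once the precision $N$ is large enough relative to $v$ and $M$.

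It remains to establish the stabilisation in $N$: there is $\widetilde N(v,M)$ such that for $N\geqslant\widetilde N(v,M)$ the image of $\mathcal{I}^{(v)}[N]$ in $\mathcal{L}/p^M$ no longer changes with $N$. Intuitively the generators $\mathcal{F}^0_{\alpha,-N'}$ with $N'>N$ differ from those already present only by terms that become $p^M$-divisible inside the ideal, by virtue of the coefficients $a_1\eta(\bar n)p^{n_1}$ together with the $\sigma$-equivariance of the construction; this is the technical core of \cite{Ab3}. Granting it, the rest is formal: $\mathcal{L}=\varprojlim_M\mathcal{L}/p^M$ and the ideals $\mathcal{L}^{(v)}$, $\mathcal{I}^{(v)}[N]$ are closed, so the family of mod-$p^M$ coincidences is precisely the assertion of Theorem~\ref{T3.2}. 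The main obstacle is exactly this stabilisation estimate, that is, producing the explicit bound $\widetilde N(v,M)$; I would obtain it, as in \cite{Ab3}, by induction on the lower central series of $\mathcal{L}$, controlling at each step the $p$-adic valuation of the tail terms created by the substitution that normalises $e_{<p}$ at the $v$-th break.
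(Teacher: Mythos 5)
The first thing to say is that the paper contains no proof of Theorem \ref{T3.2} at all: it is explicitly presented as a restatement, after passing to the projective limit over $M$, of the description of the images $\mathcal{L}^{(v)}(M)\subset \mathcal{L}/p^M\mathcal{L}$ obtained in \cite{Ab3} (``Our result from \cite{Ab3} \dots can be stated in the following form''). Your proposal has the same overall shape --- reduce to the mod\,$p^M$ statement of \cite{Ab3}, invoke a stabilisation bound $\widetilde{N}(v,M)$, pass to the limit using that the ideals are closed --- so strategically you are aligned with the paper; but, like the paper, you defer everything of substance to \cite{Ab3} (``Granting it, the rest is formal''), so the text does not constitute an independent proof.

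The one step where you do offer your own argument is the stabilisation in $N$, and that heuristic fails as stated. The coefficient of $[D_{\bar a,\bar n}]$ in $\mathcal{F}^0_{\alpha ,-N}$ is $a_1\eta (\bar n)p^{n_1}$ with $a_1$ prime to $p$ (terms with $a_1=0$ vanish) and $n_1\geqslant 0$ constrained by $p^{n_1}\leqslant a_1p^{n_1}\leqslant \gamma (\bar a,\bar n)=\alpha$; enlarging $N$ to $N'$ only relaxes the lower bound on the exponents $n_i$, $i\geqslant 2$, so the genuinely new terms (and the new admissible values of $\alpha$ near $v$) still carry coefficients whose $p$-adic valuation $n_1$ is bounded by $\log _p\alpha $, independently of $N$. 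Hence for the relevant generators with $\alpha $ of moderate size the new contributions are not $p^M$-divisible, and stabilisation modulo $p^M$ cannot be read off from the shape of the coefficients together with $\sigma$-equivariance; it is precisely the hard ramification-theoretic content of \cite{Ab1,Ab3}, obtained there by a careful analysis of the nilpotent Artin--Schreier data (via auxiliary base changes and explicit estimates along the lower central series), not by the normalisation of $e_{<p}$ over $O(K^{rad})$ plus a Herbrand-type computation that you sketch. If the proposal is meant as an actual proof rather than a pointer to \cite{Ab3}, that estimate is the missing piece you would have to supply; your description of where the elements $\mathcal{F}^0_{\alpha ,-N}$ come from is, on the other hand, consistent with relation \eqref{E3.3} of Proposition \ref{P3.3}.
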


\subsection{Some relations} \label{S3.3} 

Let $A(  \c L)$ be 
the enveloping algebra of $ \c L$ and $\wt{A}(\c L)=
A(\c L)/J(\c L)^p$, where 
$J(\c L)$ is the augmentation ideal in $A(\c L)$. Note that there is a natural 
embedding of $\Z _p$-modules $\c L\subset \wt{A}(\c L)$. 

Let $A_{<p}=\exp (-e_{<p})\in \wt{A}(\c L)_{O(K)}$ and  
$C_{<p}=-
(\id _{\wt{A}(\c L)}\otimes \partial _{\bar t})A_{<p}  \cdot A_{<p}^{-1}$. 

For $s\geqslant 1$, set 
$\bar 0_s=(\underset{\text{$s$ times}}
{\underbrace{0,\ldots ,0}})$.

\begin{Prop} \label{P3.3} 
 Let   
 $D^{(m)}_{<p}:=A_{<p}
   \cdot \sigma _{\wt{A}(\c L)}(A_{<p})\cdot \ldots   \cdot  
 \sigma _{\wt{A}(\c L)}^{m-1}(A_{<p})$, where $m\geqslant 1$.  
 Then we have the following 
 relations:
\begin{equation} \label{E3.1} 
C_{<p}=
 \sum _{s\geqslant 1,\bar a }
 a_1\eta (\bar 0_s)
 [D_{\bar a, \bar 0_s}]
 \bar t^{-\gamma (\bar a, \bar 0_s)}\, ;
 \end{equation}
 \begin{equation} \label{E3.2} 
B_{<p}:=\sum _{n\geqslant 0}p^n
\op{Ad}(D ^{(n)}_{<p})(\sigma _{\c L}^n(C_{<p}))=
\sum _{\alpha >0}\c F^0_{\alpha ,0}\bar t^{-\alpha }\, ;
\end{equation} 

\begin{equation} \label{E3.3} 
\op{Ad}\,\sigma _{\wt{A}(\c L)}^{-m}(D ^{(m)}_{<p})
(B _{<p})=
\sum _{\alpha >0}\c F^0_{\alpha ,-m}\bar t^{-\alpha }\, .
\end{equation}

\end{Prop}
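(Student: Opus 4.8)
The plan is to read \eqref{E3.1}, \eqref{E3.2} and \eqref{E3.3} as three successive explicit evaluations: first compute $C_{<p}$ as a logarithmic derivative, then substitute it into the sum defining $B_{<p}$ (which is the universal instance of the expression for $B$ obtained in Proposition~\ref{P2.4}, with $(\c L,A_{<p})$ in place of $(L(H),A)$), and finally conjugate $B_{<p}$ by $\sigma_{\c L}^{-m}(D^{(m)}_{<p})$; at each step one compares the result coefficient by coefficient in powers of $\bar t$ with the definition of the $\c F^0_{\alpha,-N}$, so the proposition amounts to a re-derivation of the normal form of \cite{Ab3} recalled in Theorem~\ref{T3.2}. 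For \eqref{E3.1} itself: since $J(\c L)^p=0$ the element $A_{<p}=\exp(-e_{<p})$ is polynomial in $e_{<p}$, and $\id_{\c L}\otimes\partial_{\bar t}$ obeys the Leibniz rule (see the Notation following Proposition~\ref{P2.4}), so the classical expansion of the logarithmic derivative of an exponential gives
$$C_{<p}=-(\id_{\c L}\otimes\partial_{\bar t})(\exp(-e_{<p}))\cdot\exp(e_{<p})=-\sum_{n\geq 0}\frac{1}{(n+1)!}\op{ad}^n(-e_{<p})\bigl((\id_{\c L}\otimes\partial_{\bar t})(-e_{<p})\bigr).$$
Using $(\id_{\c L}\otimes\partial_{\bar t})(e_{<p})=-\sum_{a\in\Z^0(p)}a\,\bar t^{-a}D_{a0}$ and the fact that iterating $\op{ad}(e_{<p})$ on a monomial $\bar t^{-a_1}D_{a_10}$ yields $\bar t^{-(a_1+\dots+a_{n+1})}$ times a fully nested bracket in $D_{a_10},\dots,D_{a_{n+1}0}$ that rearranges, via antisymmetry, into the left-normed bracket $[D_{\bar a,\bar 0_{n+1}}]$, one collects all signs and powers and, setting $s=n+1$, reads off the right-hand side of \eqref{E3.1}; here $\eta(\bar 0_s)=1/s!$ because the only admissible partition of $\bar 0_s$ is the trivial one, and $\gamma(\bar a,\bar 0_s)=a_1+\dots+a_s$.

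For \eqref{E3.2} I substitute \eqref{E3.1} into $\sum_{m\geq 0}p^m\op{Ad}(D^{(m)}_{<p})(\sigma_{\c L}^m(C_{<p}))$ and expand each conjugation. Here $\op{Ad}(D^{(m)}_{<p})=\op{Ad}(A_{<p})\op{Ad}(\sigma_{\c L}(A_{<p}))\cdots\op{Ad}(\sigma_{\c L}^{m-1}(A_{<p}))$ with $\op{Ad}(\sigma_{\c L}^j(A_{<p}))=\exp(\op{ad}(-\sigma_{\c L}^j(e_{<p})))$ (the formula from \cite{BF} used in the proof of Lemma~\ref{L2.3}), and the degree-$\ell$ term of this factor appends a block of $\ell$ letters $D_{a,j}$ carrying the coefficient $1/\ell!$ and a factor $\bar t^{-p^j(\,\cdot\,)}$, while $\sigma_{\c L}^m(C_{<p})$ supplies the leading block of letters of index $m$, weighted by $1/(\text{block size})!$ and by the leading coefficient $a_1$, with the outer $p^m$ matching $p^{n_1}$ for $n_1=m$. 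Collecting the coefficient of $\bar t^{-\alpha}$ over all words in the $D_{an}$ produces $\sum a_1\eta(\bar n)p^{n_1}[D_{\bar a,\bar n}]$ over indices with $\gamma(\bar a,\bar n)=\alpha$, $n_1\geq 0$, all $n_i\geq 0$ --- the product $\eta(\bar n)$ being exactly the accumulated reciprocal factorials of the maximal blocks of equal indices --- which is the definition of $\c F^0_{\alpha,0}$. Hence $B_{<p}=\sum_{\alpha>0}\c F^0_{\alpha,0}\bar t^{-\alpha}$.

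For \eqref{E3.3} I apply $\op{Ad}\,\sigma_{\c L}^{-m}(D^{(m)}_{<p})=\op{Ad}(\sigma_{\c L}^{-1}(A_{<p}))\cdots\op{Ad}(\sigma_{\c L}^{-m}(A_{<p}))$ to the formula just obtained for $B_{<p}$. Each factor $\op{Ad}(\sigma_{\c L}^{-j}(A_{<p}))=\exp(\op{ad}(-\sigma_{\c L}^{-j}(e_{<p})))$, $1\leq j\leq m$, appends a block of letters of index $-j$ with the matching reciprocal factorial and negative powers of $\bar t$ of the form $\bar t^{-p^{-j}(\,\cdot\,)}$; the identity part keeps $\c F^0_{\alpha,0}$, and since $-j<0\leq n_s$ for every letter already present the new blocks attach below all existing ones, preserving the non-increasing order of indices and leaving $n_1\geq 0$ (still furnished by the $\c F^0_{\alpha,0}$-part). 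Matching the coefficient of $\bar t^{-\alpha}$ with the definition of $\c F^0_{\alpha,-m}$ --- the same formula with the relaxed constraint $n_i\geq -m$ --- yields $\op{Ad}\,\sigma_{\c L}^{-m}(D^{(m)}_{<p})(B_{<p})=\sum_{\alpha>0}\c F^0_{\alpha,-m}\bar t^{-\alpha}$.

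The step I expect to be the main obstacle is the combinatorial bookkeeping in \eqref{E3.2}--\eqref{E3.3}: after unfolding all the nested $\op{Ad}(\exp\cdot)$ operators one faces a large sum over words in the $D_{an}$, and one must check that the accumulated reciprocal factorials and powers of $p$ collapse precisely to $\eta(\bar n)p^{n_1}$, that the $\bar t$-exponents assemble into $\gamma(\bar a,\bar n)$, and that only words with a non-increasing index sequence survive. This is the same bookkeeping that yields Theorem~\ref{T3.2}, so in practice one organizes the computation --- grouping consecutive equal indices and tracking the leading block separately --- so as to reduce to, or re-derive, the normal form of \cite{Ab3}.
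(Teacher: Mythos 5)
Your proposal takes essentially the same route as the paper's proof: \eqref{E3.1} via the derivative-of-the-exponential formula of \cite{BF} together with the rearrangement of nested brackets into left-normed ones, and \eqref{E3.2}--\eqref{E3.3} by expanding each conjugation through $\exp(X)\cdot Y\cdot\exp(-X)=\sum_{n\geqslant 0}\frac{1}{n!}\mathrm{ad}^n(X)(Y)$ factor by factor and matching the coefficients of $\bar t^{-\alpha}$ against the definition of $\mathcal{F}^0_{\alpha ,-N}$. The block-by-block bookkeeping you single out as the main obstacle is precisely the step the paper itself compresses into ``repeating this procedure'' and ``similar calculations'', so there is no substantive difference in method.
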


\begin{proof} For \eqref{E3.1} use, cf.\,\cite{BF}, 
theorem 4.22, to obtain 
$$d\exp (-e_{<p})  \cdot \exp (e_{<p})=
\sum _{k\geqslant 1}\frac{1}{k!}
(-\op{ad} e_{<p})^{k-1}(-d\, e_{<p})\,$$
and note that 
$(-\op{ad}e_{<p})^{k-1}(de_{<p})=$
$$(-1)^{k-1}[\underset{\text{$k-1$ times }}
{\underbrace{e_{<p},\cdots , [e_{<p}}},de_{<p}]\ldots ]= 
[\dots [de_{<p},
\underset{\text{$k-1$ times }}
{\underbrace{e_{<p}],\dots , e_{<p}}}]$$

For \eqref{E3.2} we need the following relation 
cf.\,\cite{BF}, Sect.\,4.4
$$\exp (X)  \cdot Y  \cdot \exp (-X)= 
\sum _{n\geqslant 0}\frac{1}{n!}\op{ad}^n(X)(Y)\, .$$

After applying this relation to the summand with $m=1$ we obtain 
$$p\op{Ad}\,D_{<p}^{(1)}(\sigma _{\c L}C_{<p})=
\exp (-e_{<p})  \cdot \sigma _{\c L}(C_{<p})  \cdot 
\exp (e_{<p})=$$
$$=\sum _{s\geqslant 0}\eta (\bar 0_s)(-1)^s\op{ad}^s(e_{<p})
(C_{<p})=$$
$$\sum _{\bar n\geqslant 0, \bar a}\,\,\sum _{n_1=0}^{n_1=1}
\eta (\bar n)p^{n_1}[D_{\bar a,\bar n}]
\bar t^{-\gamma (\bar a, \bar n)}\, .$$
Repeating this procedure we obtain relation \eqref{E3.2}.

Similar calculations prove the remaining item \eqref{E3.3}.  
\end{proof}

\section {Proof of Theorem \ref{T1.1}} \label{S4}

Recall briefly what we've already achieved.  

The field $K=k((t))$ is tamely ramified extension of $K_0=k_0((t_0))$, 
where $t^{e_0}=t_0$, $H:=H_0|_{\Gamma _{K}}$ and the corresponding 
group epimorphism $\pi _H:\Gamma _K\To I(H)\subset  \op{Aut}_{\Z _p}H$ 
is such that $I(H)=\exp (L(H))$, where $L(H)\subset\op{End}_{\Z _p}H$ and $L(H)^p=0$. 

Applying formalism of nilpotent Artin-Schreier theory we obtained 
a special $e=\sum _{a\in\Z ^0(p)}\bar t^{-a}l_{a0}\in L(H)_{O(K)}$ and $f\in L(H)_{O_{sep}}$ 
such that 
$$\exp (\sigma _{L(H)}f)=\exp (e)\cdot \exp (f)$$ 
and 
for any $\tau \in \Gamma _{K}$, $\pi _H(\tau )
=\exp (-f)\cdot \exp(\id _{L(H)}\otimes \tau )f$. 

We used $O(K)$-linear operator $\c F=\exp (-f)$ to 
introduce $O(K)$-module $M(H):=\c F(H_{O(K)})$. Let $A=\exp (-e)$ and 
let ${\c A}$ be a unique 
$\sigma $-linear operator on $H_{O(K)}$ such that for any $h\in H$, 
$\c A (h)=A(h)$. Then the $\sigma $-linear $\phi :M(H)\To M(H)$ is 
such that for any $u\in L(H)_{O(K)}$, 
$\phi (\c F(u))=\c F(\c A(u))$. As a result, 
we obtain the structure of etale $(\phi , O(K))$-module on $M(H)$ related to the $\Gamma _K$-module $H$.

Let $\nabla $ be the connection on $M(H)$ from Sect.\,\ref{S2.4}, 
and let $\wt{B}$ be the 
$O(K)$-linear operator on $M(H)$ uniquely determined by the condition: 
for any $m\in\c F(H)$, 
$\nabla (m)=\wt{B}(m)d\bar t/\bar t$. 
Then for any 
$u\in M(H)$, 
$$\nabla (u)=(\wt{B}+\id _{\c F(H)}\otimes \partial _{\bar t})(u)
d\bar t/\bar t\,$$
and we introduce the differential forms 
$$\wt{\Omega }[N]=
\phi ^N\wt{B}\phi ^{-N}d\bar t/\bar t\in 
\op{End}\,M(H)_{O(K^{rad})}\otimes\Omega ^1_{O(K)}\,.$$

Finally, define the $O(K)$-linear operator $B$ on $H$ by setting 
for any $u\in H_{O(K)}$, $\c F(B(u))=\wt{B}(\c F(u))$, and 
transfer $\wt{\Omega}[N]$ to $\op{End}H_{O(K^{rad})}$ in the following form  
$$\Omega [N]=
\op{Ad}({\c A}^N)(B)d\bar t/\bar t\in \op{End}(H)_{O(K^{rad})}\otimes 
_{O(K)}\Omega ^1_{O(K)}\, .$$ 
\begin{remark}
 Obviously we have the following identification  
 $$\op{End}(H)_{O(K^{rad})}\otimes _{O(K)}\Omega ^1_{O(K)}=
 \op{End}(H_0)_{O(K_0^{rad})}\otimes _{O(K_0)}\Omega ^1_{O(K_0)}\,.$$
\end{remark}

Recall that $\c A=A\cdot \sigma _H$, where $A=\exp (-e)$. 

\begin{Lem}\label{L4.1} If 
    $\c D^{(N)}=\sigma _{\op{End}H}^{-N}(A)\cdot   
    \ldots \cdot \sigma _{\op{End}H}^{-1}(A)$ then 
    $$(\sigma _{\op{End}H}^{-N}\cdot \op{Ad}(\c A^N))(B)=
    \op{Ad}\c D^{(N)}(B),\,.$$
\end{Lem}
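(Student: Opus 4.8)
The plan is to treat this as a purely formal identity about semilinear operators. The key remark is that $\op{Ad}\c A^N(B)$ denotes the honest conjugation $\c A^N\cdot B\cdot\c A^{-N}$: it is the operator obtained by transporting $\phi^N\wt B\phi^{-N}$ along the identification $\c F\colon H_{O(K^{rad})}\simeq M(H)_{O(K^{rad})}$, and it makes sense because over $O(K^{rad})$ the operator $\sigma$, and hence $\c A=A\cdot\sigma_H$, becomes invertible. Granting this, everything reduces to the commutation rule $\sigma_H\cdot X=\sigma_{L(H)}(X)\cdot\sigma_H$ recorded in the Notation of Section \ref{S2.1} together with the shape of $D^{(N)}$ from Proposition \ref{P2.4}.

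First I would expand $\c A^N$. Iterating $\sigma_H^j\cdot A=\sigma_{L(H)}^j(A)\cdot\sigma_H^j$, an induction on $N$ gives $\c A^N=A\cdot\sigma_{L(H)}(A)\cdot\ldots\cdot\sigma_{L(H)}^{N-1}(A)\cdot\sigma_H^N=D^{(N)}\cdot\sigma_H^N$, with $D^{(N)}$ exactly as in Proposition \ref{P2.4}, hence $\c A^{-N}=\sigma_H^{-N}\cdot(D^{(N)})^{-1}$. Second, I would push the semilinear part past $B$:
\[
\op{Ad}\c A^N(B)=D^{(N)}\cdot\bigl(\sigma_H^N\cdot B\cdot\sigma_H^{-N}\bigr)\cdot(D^{(N)})^{-1}=D^{(N)}\cdot\sigma_{L(H)}^N(B)\cdot(D^{(N)})^{-1},
\]
using $\sigma_H^N\cdot X\cdot\sigma_H^{-N}=\sigma_{L(H)}^N(X)$ once more. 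Third, I would apply $\id_{L(H)}\otimes\sigma^{-N}$; since over $O(K^{rad})$ this is a $\sigma$-semilinear ring endomorphism it respects products and inverses and undoes $\sigma_{L(H)}^N$ on $B$, giving $\sigma_{L(H)}^{-N}\cdot\op{Ad}\c A^N(B)=\sigma_{L(H)}^{-N}(D^{(N)})\cdot B\cdot\sigma_{L(H)}^{-N}(D^{(N)})^{-1}$. Finally, $\sigma_{L(H)}^{-N}(D^{(N)})=\sigma_{L(H)}^{-N}(A)\cdot\ldots\cdot\sigma_{L(H)}^{-1}(A)=\c D^{(N)}$, so the right-hand side is $\c D^{(N)}\cdot B\cdot(\c D^{(N)})^{-1}=\op{Ad}\c D^{(N)}(B)$, which is the assertion.

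The only genuinely delicate point — and it is a matter of care rather than of difficulty — is the bookkeeping of semilinear operators: keeping the $\sigma$-linear $\c A$ distinct from the $O(K)$-linear $A$, checking that $\id_{L(H)}\otimes\sigma$ passes correctly through the group products $A\cdot\sigma_{L(H)}(A)\cdot\ldots$ defining $D^{(N)}$ and through $(D^{(N)})^{-1}$, and making sure the powers $\sigma^{\pm N}$ and all the conjugations are performed over $O(K^{rad})$, where $\sigma$ is bijective (recall $O(K_0^{rad})=\bigcup_{N\in\N}O(K_0^{ur})[\bar t_0^{1/N}]$, and likewise for $K$). None of this is deep.
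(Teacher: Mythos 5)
Your argument is correct, and it reaches the identity by a slightly different organization than the paper. The paper proves the displayed formula by induction on $N$: assuming the case $N$, it writes $\op{Ad}\c A^{N+1}(B)=\c A\cdot \op{Ad}\c A^{N}(B)\cdot \c A^{-1}$, substitutes $\c A=A\cdot\sigma _H$, and pushes one factor of $\sigma$ through using the same commutation rule $\sigma _H\cdot X=\sigma _{L(H)}(X)\cdot\sigma _H$ that you use, picking up the extra factor $\sigma _{L(H)}^{-(N+1)}(A)$ that extends $\c D^{(N)}$ to $\c D^{(N+1)}$. You instead prove the closed-form factorization $\c A^{N}=D^{(N)}\cdot\sigma _H^{N}$ (the $D^{(N)}$ of Proposition \ref{P2.4}), deduce $\op{Ad}\c A^{N}(B)=D^{(N)}\cdot\sigma _{L(H)}^{N}(B)\cdot (D^{(N)})^{-1}$, and then conclude in one stroke by applying $\id _{L(H)}\otimes\sigma ^{-N}$ and using that it is a multiplicative ($\sigma ^{-N}$-semilinear) endomorphism over $O(K^{rad})$, where $\sigma$ is invertible. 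The ingredients are identical, so this is essentially the same proof repackaged; what your version buys is that it makes explicit the relation between the untwisted $D^{(N)}$ of Proposition \ref{P2.4} and the twisted $\c D^{(N)}$ of the lemma (and hence the link with relation \eqref{E3.3}), while the paper's induction handles the semilinear bookkeeping one step at a time without ever writing $\c A^{N}$ in factored form. You also correctly pin down the two points the paper leaves implicit: that $\op{Ad}\c A^{N}(B)$ is honest conjugation by the semilinear operator $\c A^{N}$ (the transport of $\phi ^{N}\wt B\phi ^{-N}$ under $\c F$), and that the left-hand side $\sigma _{L(H)}^{-N}\cdot\op{Ad}\c A^{N}(B)$ is the coefficientwise twist by $\id _{L(H)}\otimes\sigma ^{-N}$, which is exactly how the lemma is used for Theorem \ref{T1.1}.
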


\begin{proof} Use induction on $N\geqslant 0$. 
If $N=0$ there is nothing to prove.  
 Suppose lemma is proved for $N\geqslant 0$. Then 
 $$(\sigma _{\op{End}H}^{-(N+1)}\cdot\op{Ad}(\c A^{N+1}))(B)=
 \sigma _{\op{End}H}^{-(N+1)}(\c A\cdot \op{Ad}(\c A^N)
 (B)\cdot\c A^{-1})=$$
 $$\sigma _{\op{End}H}^{-(N+1)}(\c A\cdot 
 (\sigma _{\op{End}H}^N\cdot\op{Ad}(\c D^{(N)})(B))\cdot \c A^{-1})=$$
 $$\sigma _{\op{End}H}^{-(N+1)}(A\cdot \sigma _H\cdot 
 (\sigma _{\op{End}H}^{N}\op{Ad}(\c D^{(N)})(B))\cdot \sigma _H^{-1}\cdot A^{-1})
 =$$
 $$\sigma _{\op{End}H}^{-(N+1)}(A\cdot  
 (\sigma _{\op{End}H}^{N+1}\op{Ad}(\c D^{(N)})(B))\cdot A^{-1})
 =$$
 $$\sigma _{\op{End}H}^{-(N+1)}(A)\cdot \op{Ad}(\c D^{(N)})(B)\cdot 
 \sigma _{\op{End}H}^{-(N+1)}(A^{-1})=
 \op{Ad}(\c D^{(N+1)})(B)$$
 The lemma is proved. 
 \end{proof} 
 
 Under the projection $\log\bar{\pi} _H:\c G_{<p}\To G(L(H))$ we have:
 \medskip 
  
 $D_{an}\mapsto l_{an}=\sigma _{L(H)}^nl_{a0}$, 
 $e_{<p}\mapsto e$, $f_{<p}\mapsto f$, 
 $A_{<p}\mapsto A$,  $C_{<p}\mapsto C$, 
 \medskip 
 
 $B_{<p}\mapsto B$ 
 and $\sigma ^{-m}_{\wt{\c A}(\c L)}D^{(m)}_{<p}\mapsto \c D^{(m)}$. 
 
 \begin{remark} 
 Because $\log \bar\pi _H(\c L_{<p})=L(H)$ we obtain the statement from Remark \ref{r8}.
 \end{remark} 
 
 As a result, our differential form appears as 
 the image of 
 $\sum \c F^0_{\alpha ,-N}\bar t^{-\alpha }
 d\bar t/\bar t$.
 \medskip 
 
 It remains to notice that when getting back to the field $K_0$, 
 we have $d\bar t/\bar t=e_0^{-1}d\bar t_0/\bar t_0$, 
 $\bar t^{-\alpha }=\bar t^{-\alpha /e_0}$, 
 $\Gamma _K^{(\alpha )}=\Gamma _{K_0}^{(\alpha /e_0)}$ and  
 $\pi _H|_{\c I}=\pi _{H_0}|_{\c I}$. 
 \medskip 
 
 Theorem \ref{T1.1} is proved. 

\begin{remark}
 a) The conjugacy class of the 
 differential form $\Omega [N]$ does not 
 depend on a choice of a special form for $e$.  
\medskip 
 
 b) It would be very interesting to verify whether our 
 results could be established in the case of  
 $\Gamma _{K}$-modules which 
 do not satisfy the $\op{Lie}$ condition, e.g. for the 
 $\Gamma _{K}$-module from \cite{Im} (the case $n=p$ 
 in the notation of that paper). 
\end{remark}

\section{Mixed characteristic} \label{S5}

Let 
$ E_0$ be a complete discrete valuation field of 
characteristic 0 with finite residue field 
$k_0$ of characteristic $p$. 
Let $\bar E_0$ be an algebraic closure of $E_0$ and for any 
field $E$ such that $E_0\subset E\subset \bar E_0$, set 
$\op{Gal}(\bar E_0/E)=\Gamma _E$. 
Suppose that $E_0$ contains a primitive $p$-th root of 
unity $\zeta _1$.

We are going to develop an analog of the above characterisitc $p$ theory in the context of finite 
$\F _p[\Gamma _{E_0}]$-modules $H_{E_0}$ satisfying 
an analogue of 
the Lie condition from Sect.\,\ref{S2.1}:
\medskip 

{\it if $\pi _{H_{E_0}}:\Gamma _{E_0}\To\op{Aut}_{\F _p}(H_{E_0})$ 
determines a $\Gamma _{E_0}$-action on $H_{E_0}$ then there is a  
Lie $\F _p$-subalgebra $L(H_{E_0})\subset \op{End}_{\F _p}(H_{E_0})$ such that 
$L(H_{E_0})^p=0$ and $\exp (L(H_{E_0}))=\pi _{H_{E_0}}(I)$, where 
$I$ is the wild ramification subgroup in $\Gamma _{E_0}$.}
\medskip 

\begin{remark}
Contrary to the characteristic $p$ case we restrict ourselves to  
the Galois modules killed by $p$ because the theory 
from   \cite{Ab12, Ab13} is developed recently only under that assumption.  
\end{remark}

Fix a choice of a uniformising element $\pi _0$ in $E_0$. 

Let 
$\wt{E}_0=E_0(\{\pi^{(n)} _0)\ |\ n\in\Z _{\geqslant 0}\})
\subset \bar E_0$, where 
$\pi ^{(0)}_0=\pi _0$ and for all $n\in\N $, 
$\pi ^{(n)p}_0=\pi ^{(n-1)}_0$. 
The field-of-norms functor $X$ 
provides us with:
\medskip 

-- a complete discrete  
valuation field  $X(\wt{E}_0)=K_0$  
of characteristic $p$ with residue field $k_0$ and a  
fixed uniformizer 
$t_0=\varprojlim \pi ^{(0)}_n$;
\medskip

-- 
an identification of $\Gamma _{K_0}=
\Gal (K_0^{sep}/K_0)$ with ${\Gamma }_{\wt{E}_0}
\subset\Gamma _{E_0}$. 
\medskip 

Let $E$ be a finite tamely ramified extension of $E_0$ 
in $\bar E_0$ 
such that $\pi _{H_{E_0}} (\Gamma _E)=I(H_0)$. 
By replacing $E$ with a suitable finite unramified 
extension we can assume that $E$ has uniformiser $\pi $ such that 
$\pi ^{e_0}=\pi _0$. Let $k$ be the residue field of $E$. 

It is easy to see that the field 
$\wt{E}:=E\wt{E}_0$ appears in the form 
$E(\{\pi^{(n)}\ |\ n\geqslant 0\})$, where 
$\pi ^{(0)}=\pi $,  
$\pi ^{(n)p}=\pi ^{(n-1)}$ and for all $n$, 
$\pi ^{(n)e_0}=\pi ^{(n)}_0$. 
In particular, $K:=X(\wt{E})=k((t))$, where 
$t=\varprojlim \pi ^{(n)}$ is 
uniformiser such that $t^{e_0}=t_0$.

Let $\c G_{<p}=\Gamma _K/\Gamma _K^pC_p(\Gamma _K)$ 
and $\Gamma _{<p}=\Gamma _E/\Gamma _E^pC_p(\Gamma _E)$. 
According to \cite{Ab12, Ab13} we have the following natural exact sequence 
$$\c G_{<p}\To \Gamma _{<p}\To \langle \tau _0
\rangle ^{\Z /p}\To 1\,,$$
where $\tau _0\in\op{Gal}(E(\pi ^{(1)})/E)$ is such that 
$\tau _0(\pi ^{(1)})=\zeta _1\pi ^{(1)}$. We can use the identification 
$\bar\eta _{<p}:\c G _{<p}\simeq G(\c L)$ from Sec.\,\ref{S3.1} 
obtained via the special element $e_{<p}$ and the corresponding 
$f_{<p}$ such that $\sigma _{L(H)}(f_{<p})=e_{<p}\circ f_{<p}$. 

Then we can use the equivalence of categories from Sec.\ref{S1.3} to identify 
$\Gamma _{<p}$ with $G(L)$ where $L$ is 
a profinite Lie $\F _p$-algebra included into the following exact sequence 
\begin{equation} \label{E5.1}
 \c L\To L\To \F _p\tau _0\To 0\, .
\end{equation}

When studying the structure of \eqref{E5.1} in \cite{Ab13} 
we proved that $\tau _0$ can be replaced by a suitable 
$h _0\in\op{Aut}K$. 
More precisely, 
suppose  
$$\zeta _1\equiv 1+\sum _{j\geqslant 0}[\beta _j]\pi _0^{(e_0^*/p)+j}\,\op{mod}\,p$$
with Teichm\" uller representatives $[\beta _j]$ of $\beta _j\in k$ 
and $e^*=ep/(p-1)$, where $e$ is the ramification index for $E/\Q _p$. Then 
$h_{0}$ can be defined as follows: $h_{0}|_k=\id _k$ and 
$$h_{0}(t)=t\left (1+\sum _{j\geqslant 0}\beta _j^pt^{e^*+pj}\right )=
t\wt{\exp}(\omega (t)^p)\,,$$
where $\wt{\exp}$ is the truncated exponential and 
$\omega (t)\in t^{e^*/p}k[[t]]^*$. 
\medskip

This allowed us to apply 
formalism of the nilpotent Artin-Schreier theory 
to specify ``good'' lifts $\tau _{<p}$ of $\tau _0$ to $L$ 
(what is equivalent to specifying "good lifts" $h_{<p}$  of $h_0$).

In particular, we obtained in \cite{Ab12, Ab13} the following description of the image 
$\bar L\subset L$ of $\c L$ from exact sequence \eqref{E5.1}. 
Introduce the weight function $\op{wt}$ on $\c L_{k}$ 
by setting $\op{wt}(D_{an})=s\in\N $ iff $(s-1)e^*\leqslant a<se^*$. Then 
$$\op{Ker}(\c L\To L)=\c L(p)=\{l\in \c L\ |\ \op{wt}(l)\geqslant p\}\,,$$
$\bar L=\c L/\c L(p)$ and $\c L\To \bar L$ is 
the natural projection. 

If $h_{<p}$ is a lift of $h_0$ to $K_{<p}$ then it is uniquely determined by $c=c(h_{<p})\in L_{K}$ 
such that 
$$(\id _{\c L_{<p}}\otimes h_{<p})f=c\circ (\op{Ad}(h_{<p})\otimes\id _{K_{<p}})f\, .$$
This allowed us to describe the corresponding action of the group 
$\langle h_{<p}\rangle ^{\Z /p}$ on $f$ as an action of an infinitesimal group scheme of order $p$. The differential of 
this action is given by the "linear part" $c_1\in \bar L_{K}$ of $c$ which could be described by a suitable 
recurrent procedure. Finally, we proved that $c_1(0)\in \bar L_{k}$ 
(where $c_1=\sum _{n\in\Z}c_1(n)t^n$ with all $c_1(n)\in\bar L_{k}$) is an absolute invariant of the lift $h_{<p}$.

Consider the expansion $\omega (t)^p=\sum _{j\geqslant 0}A_jt^{e^*+pj}$, 
$A_j\in k$.

Denote by $\bar L^{(e^*)}\subset \bar L$ 
the image of the ramification subgroup $\c L^{(e^*)}$ in $L$, cf. \eqref{E5.1}.  
Then Theorem 5.1 from \cite{Ab13} states: 

 $\bullet $\ the lift $\tau _{<p}$ of $\tau _0$ is ``good''   
 iff  the value $c_1(0)\in \c L_{k}$ of the differential 
 $d\tau _{<p}$ at 0
 satisfies the following congruence 
 $$c_1(0)\equiv \sum _{j\geqslant 0}
\sum _{i\geqslant 0}\sigma ^i(A_j   \c F^0_{e^*+pj,-i})                          
\,\op{mod}\,\bar{L}_k^{(e^*)}\, .$$ 
It remains to note that for $i,j\gg 0$, 
$\c F^0_{e^*+pj,-i}\in \bar L_k^{(e^*)}$ and 
the right-hand double sum contains only finitely many non-zero terms modulo $\bar L_{k}^{(e^*)}$.  
It can be rewritten also in the following form 
$$\sum _{i,j\geqslant 0}\op{Res}\left (
\sigma ^i(A_jt^{e^*+pj}\cdot 
\sigma ^{-i}\Omega _{<p}[i])\right )\, ,$$
and the image of this expression in $L(H)_k$ equals  
$$\sum _{i\geqslant 0} \op{Res}\left (\sigma ^{i+1}\omega (t)\cdot \Omega [i]\right )\,.$$

{\sc Acknowledgements.} The author expresses his deep 
gratitude to Prof.\,E.\,Khukhro for helpful discussions.

\end{document}